\documentclass[11pt,a4paper]{amsart}

\listfiles
\usepackage{times,epsfig}
\usepackage{amsfonts,amscd,amsmath,amssymb,amsthm}

\usepackage{enumitem}
\usepackage{bbm}
\usepackage{ifthen} 

\usepackage{tikz}
\usepackage{xy} \xyoption{all}
\usepackage{color,graphics}

\usepackage{hyperref}

\numberwithin{equation}{section}

\newcommand{\uloopr}[1]{\ar@'{@+{[0,0]+(-4,5)}@+{[0,0]+(0,10)}@+{[0,0] +(4,5)}}^{#1}}
\newcommand{\uloopd}[1]{\ar@'{@+{[0,0]+(5,4)}@+{[0,0]+(10,0)}@+{[0,0]+ (5,-4)}}^{#1}}
\newcommand{\dloopr}[1]{\ar@'{@+{[0,0]+(-4,-5)}@+{[0,0]+(0,-10)}@+{[0, 0]+(4,-5)}}_{#1}}
\newcommand{\dloopd}[1]{\ar@'{@+{[0,0]+(-5,4)}@+{[0,0]+(-10,0)}@+{[0,0 ]+(-5,-4)}}_{#1}}

\newcommand{\luloop}[1]{\ar@'{@+{[0,0]+(-8,2)}@+{[0,0]+(-10,10)}@+{[0, 0]+(2,2)}}^{#1}}

\DeclareSymbolFont{SY}{U}{psy}{m}{n}
\DeclareMathSymbol{\emptyset}{\mathord}{SY}{'306}

\DeclareMathSymbol{\newtimes}{\mathbin}{SY}{'264}

% \newcommand{\HH}{\mathbb{H}}
% \newcommand{\Q}{\mathbb{Q}}
% \newcommand{\R}{\mathbb{R}}
% \newcommand{\T}{\mathbb{T}}
% \newcommand{\C}{\mathbb{C}}
% \newcommand{\K}{\mathbb{K}}
% \newcommand{\Z}{\mathbb{Z}}
% \newcommand{\N}{\mathbb{N}}
% \newcommand{\bP}{\mathbb{P}}
% \newcommand{\PP}{\mathbb{P}}
% %\newcommand{\1}{\mathbb{I}}
% \newcommand{\E}{\mathbb{E}}
% \newcommand{\EE}{\mathsf{E}}
% \newcommand{\UU}{\mathbb{U}}
% \newcommand{\VV}{\mathbb{V}}

\newcommand{\fF}{\mathfrak{F}}

\newcommand{\fA}{\mathfrak{A}}

%%%%%%%%%%%%%%%%%%%%%%%%%%%%%%%%
%Kaliegraphie
%%%%%%%%%%%%%%%%%%%%%%%%%%%%%%%%
\newcommand{\cA}{{\mathcal A}}
\newcommand{\cB}{{\mathcal B}}

\newcommand{\cF}{{\mathcal F}}
         %% wird umdefiniert s.u.
\newcommand{\cH}{{\mathcal H}}
\newcommand{\cI}{{\mathcal I}}

\newcommand{\cK}{{\mathcal K}}

\newcommand{\cR}{{\mathcal R}}

\newcommand{\cW}{{\mathcal W}}

\renewcommand{\1}{\mathbbm 1}

\marginparwidth 20mm \addtolength{\textheight}{10mm}
\addtolength{\textwidth}{20mm} \addtolength{\topmargin}{-10mm}

\oddsidemargin 10mm \evensidemargin 10mm

{\bf}{\it}
{\bf}{\it}
{\bf}{\it}
{\bf}{\it}

{\bf}{\it}

\newtheorem{theorem}{Theorem}[section]{\bf}{\it}
\newtheorem{proposition}[theorem]{Proposition}{\bf}{\it}
{\bf}{\it}
\newtheorem{lemma}[theorem]{Lemma}{\bf}{\it}
{\bf}{\it}
{\bf}{\it}
{\bf}{\it}

\theoremstyle{definition}
\newtheorem{definition}[theorem]{Definition}%{\it}{\rm}
\newtheorem{remark}[theorem]{Remark}%{\it}{\rm}
%{\it}{\rm}
\newtheorem{examples}[theorem]{Examples}

%\theoremstyle{\mythmstyle}       % my style (new fonts) -- body italics
%\theoremstyle{plain} % body italics (original definition)

%\newtheorem{theorem}{Theorem}[section]
%\newtheorem{maintheorem}[theorem]{Main Theorem}
%\newtheorem{proposition}[theorem]{Proposition}
%\newtheorem{lemma}[theorem]{Lemma}
%\newtheorem{corollary}[theorem]{Corollary}

%for the intro
\theoremstyle{plain}

\theoremstyle{definition}

%%%%%%%%%%%%%%%%%%%%%%%%%%%%%%%%%%%%%%%%%%%%%%%%%%%%%%%%%%%
%%%%%%%%%%%   Special Commands
%%%%%%%%%%%%%%%%%%%%%%%%%%%%%%%%%%%%%%%%%%%%%%%%%%%%%%%%%%%

%\newtheorem{examples}[example]{Examples}

\newlist{Fenumerate}{enumerate}{1}
\setlist[Fenumerate, 1]{label = \textbf{(F\arabic*)}, ref = {{\bf (F\arabic*)}}}

\newcommand{\R}{\mathbb{R}}

\newcommand{\N}{\mathbb{N}}
\newcommand{\C}{\mathbb{C}}
%\newcommand{\1}{\mathbbm 1}
%\newcommand{\quadtext}[1]{\quad\text{#1}\quad}

% \newcommand{\cA}{{\mathcal A}}
% \newcommand{\cB}{{\mathcal B}}
% %\newcommand{\cC}{{\mathcal C}}
% \newcommand{\cD}{{\mathcal D}}
% \newcommand{\cE}{{\mathcal E}}
% \newcommand{\cF}{{\mathcal F}}
% \newcommand{\cG}{{\mathcal G}}         %% wird umdefiniert s.u.
% %%%\newcommand{\cG}{{\Gamma}}
% \newcommand{\cH}{{\mathcal H}}
% \newcommand{\cI}{{\mathcal I}}
% \newcommand{\cJ}{{\mathcal J}}
% \newcommand{\cK}{{\mathcal K}}
% \newcommand{\cL}{{\mathcal L}}
% \newcommand{\cM}{{\mathcal M}}
% \newcommand{\cN}{{\mathcal N}}
% \newcommand{\cO}{{\mathcal O}}
% \newcommand{\cP}{{\mathcal P}}
% \newcommand{\cQ}{{\mathcal Q}}
% \newcommand{\cR}{{\mathcal R}}
% \newcommand{\cS}{{\mathcal S}}
% \newcommand{\cT}{{\mathcal T}}
% \newcommand{\cU}{{\mathcal U}}
% \newcommand{\cV}{{\mathcal V}}
% \newcommand{\cX}{{\mathcal X}}
% \newcommand{\cW}{{\mathcal W}}
% \newcommand{\cZ}{{\mathcal Z}}

%\newcommand{\fA}{\mathfrak{A}}

\newcommand{\BH}{{\mathcal{B}(\mathcal{H})}}
\newcommand{\RS}{\R^*}
\newcommand{\quadtext}[1]{\quad\text{#1}\quad}
%%%%%%%%%%%%%%%%%%%%%%%%%%%%%%%%%%%%%%%%%%%%%%%%%%%%%%%%%%%%%%%

%------------------------------------------------------------
% begin of the document
%------------------------------------------------------------

%%%%%%%%%%%%%%%%%%%%%%%%%%%%%%%%%%%%%%%%%%%%%%%%%%%%%%%%%%%%%%%%%%%%%%%%%%%%%
%%%%%%%%%%%%%%%%%%%%%%%%%%%%%%%%%%%%%%%%%%%%%%%%%%%%%%%%%%%%%%%%%%%%%%%%%%%%%
%%%%%%%%%%%%%%%%%%%%%%%%%%%%%%%%%%%%%%%%%%%%%%%%%%%%%%%%%%%%%%%%%%%%%%%%%%%%%

\begin{document}

\title{A note on commutation relations and finite dimensional approximations}

\author[Fernando Lled\'{o}]{Fernando Lled\'{o}$^{1}$}
\address{Department of Mathematics, University Carlos~III Madrid,
  Avda.~de la Universidad~30, 28911 Legan\'{e}s (Madrid), Spain
  and Instituto de Ciencias Matem\'{a}ticas (CSIC - UAM - UC3M - UCM).}
\email{flledo@math.uc3m.es}

\author[Diego Mart\'{i}nez]{Diego Mart\'{i}nez $^{2}$}
\address{Mathematisches Institut, WWU M\"{u}nster, Einsteinstr. 62, 48149 M\"{u}nster, Germany.}
\email{diego.martinez@uni-muenster.de}

\thanks{{$^{1}$} Supported by Spanish Ministry of Economy and Competitiveness through the
Severo Ochoa Programme for Centres of Excellence in R\& D (SEV-2015-0554) and from the Spanish National Research Council, through the \textit{Ayuda extraordinaria a Centros de Excelencia Severo Ochoa} (20205CEX001).}

\thanks{{$^{2}$} Partly funded by research projects MTM2017-84098-P, Severo Ochoa SEV-2015-0554 and BES-2016-077968 of the Spanish Ministry of Economy and Competition (MINECO), Spain. Partly funded by the Deutsche Forschungsgemeinschaft (DFG, German Research Foundation) under Germany’s Excellence Strategy – EXC 2044 – 390685587, Mathematics Münster – Dynamics – Geometry – Structure; the Deutsche Forschungsgemeinschaft (DFG, German Research Foundation) – Project-ID 427320536 – SFB 1442, and ERC Advanced Grant 834267 - AMAREC}

\date{\today}

\subjclass[2020]{81T05,46L05,47L90}
\keywords{Canonical commutation relations, C*-algebras, F\o lner approximations, amenable traces}

\begin{abstract}
In this article we show that the main C*-algebras describing the canonical commutation relations of quantum physics, i.e., the Weyl and resolvent algebras, are in the class of 
F\o lner C*-algebras, a class of C*-algebras admitting a kind of finite approximations of F\o lner type. In particular, we show that the tracial states of the resolvent algebra are uniform locally finite dimensional.
\end{abstract}

\maketitle

%%%%%%%%%%%%%%%%%%%%%%%%%%%%%%%%%%%%%%%%%%%%%%%%%%%%%%%%%%%%%%%%%%%%%%%%%%%%%%%%%%%%%%%%%%%%%%%%%%%%%%%%%%%%%%%%%%%%%%%%%%%%%%%%%%%%%%%%%%%%%%%
\section{Introduction}
\label{sec:1}

One of the most fundamental relations in quantum mechanics is the canonical commutation relation (CCR) between  
position and momentum, which when written in the most simple case (and taking $\hbar=1$) is
\begin{equation}\label{eq:ccr}
 QP-PQ=i\1\;.
\end{equation}
Heisenberg attributed this ``ingenious'' relation to Born in a famous letter to Pauli in 1925.
It soon became clear that neither matrices nor bounded operators on a Hilbert space can 
represent the relation (\ref{eq:ccr}) (cf. \cite{Wintner47,Wielandt49}). This fact opens several possibilities
to analyze mathematically these relations. A first natural development is to study the commutation relations (or more
generally commutators) of unbounded operators affiliated to finite von Neumann algebras, i.e., von Neumann algebras where
the identity $\1$ is a finite projection in the sense of Murray and von Neumann. In the case of the CCRs this restricts necessarily to 
type II$_1$ von Neumann algebras which are infinite dimensional but still allow some kind of finiteness through the existence of a normalized trace.
We refer to \cite{KL14,KLT20} for further motivations, results and an interesting historical account. 
Alternatively, the commutation relations can be reformulated in terms of more tractable bounded operators using bounded functions of the canonical variables 
(or fields) and encode the commutation relations in terms of them. There are two main options in the mathematical physics literature: the well known Weyl (or CCR) algebra
\cite{Manuceau68} generated by complex exponentials of the fields and the more recent resolvent algebra introduced by Buchholz and Grundling in \cite{BG08}.
The connection between the generators of the Weyl and resolvent algebras and the fields satisfying the commutation relations 
is obtained in terms of the notion of regular states. For example, 
in the case of the Weyl algebra $\cW(X,\sigma)$ associated to a symplectic vector space $(X,\sigma)$ a state $\omega$ is called \textit{regular} if the corresponding GNS-representation $\pi=\pi_\omega$ satisfies that the one-parameter groups 
\[
 \R\ni t\to \pi\Big(W(t\, f)\Big), \; \text{where} \; f\in X,
\]
are strongly continuous. Observe that $W(f)$ denotes the Weyl elements generating the C*-algebra.
Quantum fields appear as unbounded self-adjoint generators of the one-parameter groups associated to the Weyl elements in a regular representation. 
For the definition and further results concerning regular states in the context of the resolvent algebra see Section~4 in \cite{BG08}.

In a completely different context the theory of C*-algebras has been developed in the last decades focusing on different approximation patterns
in terms of finite structures and leaving the representation theory in the background. For instance, 
central notions in operator algebras like nuclearity, exactness or quasidiagonality (see, e.g., \cite{bBrown08,bBlackadar06}) are nowadays defined using nets of contractive completely positive (\textit{c.c.p.}) maps from the C*-algebra $\cA$ into matrices highlighting different approximation patterns of the algebra:
\[
 \varphi_n\colon\cA\to M_{k(n)}(\C).
\]
Quasidiagonality, for example, requires these maps to be asymptotically multiplicative and asymptotically isometric in the C*-norm. 
The class of F\o lner C*-algebras then weakens the convergence in the multiplicativity condition to a convergence in mean (see Definition~\ref{def:FA} for details), hence broadening the class of allowed approximations. 
These type of approximations show that the linear, the involutive and the order structure of the C*-algebra are respected by the approximating matrix algebras, while the product and the norm of $\cA$ are captured only asymptotically. This class of C*-algebras has several equivalent characterizations which can be useful in the description of quantum theory.
Indeed, let $\cA\subset\cB(\cH)$ be a unital C*-algebra modeling the quantum observables in a complex separable Hilbert space $\cH$ not containing the compact operators.
If $\cA$ is a F\o lner C*-algebra, then there is a net $\{\rho_n\}_n$ 
positive trace-class operators with $\mathrm{Tr}(\rho_n)=1$ and such that
\[
\lim_{n} \|\rho_n A-A\rho_n\|_1=0 \;\; \text{for every} \;\; A\in\cA,
\]
where $\|\cdot\|_1$ denotes the trace-class norm in $\cB(\cH)$. Moreover, it is well known that the net of states $\rho_n$ may be taken of finite range.
We refer to \cite{Bedos95,AL14,ALLW-2} for additional remarks and motivation concerning this class of algebras. Finally, F\o lner C*-algebras and, in particular 
quasidiagonal C*-algebras, admit a good spectral approximation behavior, e.g., weak convergence of spectral measures associated to selfadjoint elements (see, e.g., \cite{Arveson94,Bedos97,AL14,Lledo13,Brown06b}).

The aim of this article is to show that the main C*-algebras modeling the canonical commutation relations (the Weyl and the resolvent algebras algebras associated with a real symplectic vector space $(X,\sigma)$) are F\o lner C*-algebras and, hence, admit finite dimensional approximations in mean. Note that since
the commutation relations (either in terms of Weyl elements or resolvents (see Eqs.~(\ref{eq:weyl}) and (\ref{eq:resolvent}) below) involve products, one obtains a finite-dimensional approximation of the C*-algebra in terms of matrices and where the commutation relations appear only asymptotically. Recall that the Weyl and resolvent C*-algebras are both non-separable (except in trivial cases), and hence we refer to \cite{Bedos95, ALLW-1,ALLW-2} for the analysis of non-separable of F\o lner C*-algebras.
The CAR-algebra, which models the canonical anti-commutation relations, is also in this class (see, e.g., \cite{LM19}), as it is even quasidiagonal (see Definition~\ref{def:FA} and comments below). Finally, we will show that
the traces of the resolvent algebra admit a strong approximation in terms of tracial states on matrix algebras. 

The article is structured as follows. In the next section we extend the notion of F\o lner C*-algebra to the nonunital case and prove some useful results, such as stability under inductive
limits. We also mention some related notions needed later like, e.g., algebraic amenability or the notion of an amenable trace. In Section~\ref{sec:3} we show that the Weyl algebra is a F\o lner C*-algebra. In the last section we study the resolvent algebra and show that it is a F\o lner C*-algebra where any trace is uniform locally finite dimensional, regardless of the dimension of the starting symplectic space.

\paragraph{\textbf{Acknowledgements:}} The first named author would like to acknowledge useful conversations with R.F. Werner in Banff in 2019. Moreover, we thank an anonymous referee for their helpful comments on a previous version of the paper.

%%%%%%%%%%%%%%%%%%%%%%%%%%%%%%%%%%%%%%%%%%%%%%%%%%%%%%%%%%%%%%%%%%%%%%%%%%%%%%%%%%%%%%%%%%%%%%%%%%%%%%%%%%%%%%%%%%%%%%%%%%%%%%%%%%%%%%%%%%%%%%%
\section{F\o lner C*-algebras}
\label{sec:2}

The structure of the resolvent algebra and, in particular, the importance of compact operators in the case of finite dimensional symplectic vector spaces
suggests to introduce the definition of F\o lner C*-algebra in the non-unital case. Concretely, if the underlying symplectic vector space $(X,\sigma)$ is finite dimensional then a characteristic and
useful feature of the associated resolvent algebra is its unique minimal ideal is isomorphic to the 
compact operators (see \cite[Theorem~4.5]{B14} and Section~\ref{sec:4}). 

In the following we generalize the definition of F\o lner C*-algebra (cf. Definition~4.1 in \cite{AL14}) to the non-unital case. Recall that a linear map between C*-algebras
$\varphi\colon \cA\to\cB$ is called \textit{completely positive} (\textit{c.p.}) if it preserves positivity for all matrix amplifications. If $\varphi$ is contractive (i.e., if $\|\varphi\|\leq 1$) we 
abbreviate it by \textit{c.c.p.} If the C*-algebras are unital and $\varphi$ sends the unit of $\cA$ to the unit of $\cB$ we say $\varphi$ is \textit{u.c.p.}
(see \cite{Paulsen08}). Recall that u.c.p. maps are neither multiplicative nor do they preserve orthogonality in general. Define the multiplicative domain of the u.c.p. map $\varphi\colon \cA\to\cB$ as
\begin{equation}\label{eq:multiplicative}
\cA_\varphi:=\{A\in\cA\mid \varphi(A^*A)=\varphi(A)^*\varphi(A)\quadtext{and} \varphi(AA^*)=\varphi(A)\varphi(A)^*\}\;,
\end{equation}
which turns out to be the largest algebra on which $\varphi$ restricts to a *-homomorphism.

\begin{definition}\label{def:FA}
A C*-algebra $\cA$ is a {\em F\o lner C*-algebra} if there exists a net of contractive completely positive (c.c.p.) maps
$\varphi_n\colon\cA\to M_{k(n)}(\C)$ being both
\begin{itemize}
 \item[i)] asymptotically multiplicative, i.e., 
 \begin{equation}\label{eq:mult-2}
\lim_{n} \|\varphi_n(AB)-\varphi_n(A)\varphi_n(B)\|_{2,\mathrm{tr}}=0 \;\; \text{for every} \;\; A,B\in\cA,
\end{equation}
where $\|F\|_{2,\mathrm{tr}}:=\sqrt{\mathrm{tr}(F^*F)}$, $F\in M_{n}(\C)$ and $\mathrm{tr}(\cdot)$ denotes the unique tracial state on a matrix algebra
$M_{n}(\C)$;
 \item[ii)] asymptotically isometric, i.e., 
 \begin{equation}\label{eq:norm}
   \|A\|=\lim_{n}\|\varphi_n(A)\|, \;\; \text{for every} \;\; A\in\cA.
\end{equation}
\end{itemize}
\end{definition}
This definition is analogous to  the general definition of quasidiagonal C*-algebra (see, e.g., Definition~7.1.1 in \cite{bBrown08}).
In fact, a quasidiagonal C*-algebra satisfies the previous two conditions except that asymptotic multiplicativity is defined in terms of the C*-norm $\|\cdot\|$.
Since the $\|\cdot\|_{2,\mathrm{tr}}$-norm is weaker than the C*-norm it is clear that quasidiagonal algebras are a subclass of F\o lner C*-algebras. 
It is also an immediate consequence of Definition~\ref{def:FA} that the class of F\o lner C*-algebras is stable when passing to subalgebras. The same definition above may also be used 
to define F\o lner pre-C*-algebras.

\begin{examples}
 \begin{itemize}
  \item[(i)] The set of compact operators $\cK(\cH)$ on an infinite dimensional Hilbert space is a non-unital quasidiagonal C*-algebra, hence a F\o lner C*-algebra. The c.c.p. maps $\varphi_n\colon\cK(\cH)\to M_{k(n)}(\C)$ are given by compressions $\varphi_n(K):=P_n K P_n$, where $\{P_n\}_{n\in\N}$ is any sequence of non-zero finite rank projections strongly converging to $\1$ and $k(n)=\dim(P_n)$. Note that the quasidiagonalising condition follows from the fact that $\{P_n\}_{n\in\N}$  is an approximate unit for $\cK(\cH)$. 
  \item[(ii)] The Toeplitz algebra $\mathcal{T}$ generated by the unilateral shift on $\ell^2(\N)$ is an example of a F\o lner C*-algebra which is not quasidiagonal. Indeed, note that it is not quasidiagonal, as it has a proper isometry (see~\cite[Proposition~7.1.15]{bBrown08}). Meanwhile, the maps $\varphi_n(A) := P_n A P_n$, where $P_n \in \mathcal{B}(\ell^2(\N))$ projects onto the first $n$ vectors of the canonical orthonormal basis of $\ell^2(\N)$, witness the F\o lner approximation of $\mathcal{T}$.
 \end{itemize}
\end{examples}

The following lemma is the main technical result, and assures that if the C*-algebras are unital one can replace the approximating c.c.p. maps by unital completely positive maps
(see Proposition~\ref{prop:unital}).

\begin{lemma}\label{lem:spreading}
Let $\cA$ be a unital F\o lner C*-algebra and denote by $\varphi_n\colon\cA\to M_{k(n)}(\C)$ a net of c.c.p. maps which are asymptotically multiplicative in the 
$\|\cdot\|_{2,\mathrm{tr}}$-norm and asymptotically isometric in the operator norm. 
Define $e_n:=\varphi_n(\1)\in M_{k(n)}(\C)$ and $P_n:=E_n\left( [1-\varepsilon_n\,,\, 1] \right)$, where $E_n(\cdot)$ denotes the resolution of the identity associated to the 
self-adjoint matrix $e_n$ and $\varepsilon_n > 0$ tends to $0$ when $n$ grows. Then
\[
\lim_{n}\| e_n-P_n\|_{2,\mathrm{tr}}=0\;.
\]
\end{lemma}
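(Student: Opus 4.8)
The plan is to reduce the statement to a scalar estimate against the spectral distribution of $e_n$ and then to calibrate $\varepsilon_n$ in terms of the multiplicativity defect. First I would note that, since $\varphi_n$ is c.c.p. and $\1$ is a positive element of norm one, $e_n=\varphi_n(\1)$ is a positive contraction; hence $\spec(e_n)\subseteq[0,1]$ and $P_n=E_n([1-\varepsilon_n,1])=\chi_{[1-\varepsilon_n,1]}(e_n)$ is a function of $e_n$ in the sense of the (Borel) functional calculus, so in particular $e_n$ and $P_n$ commute. Feeding the pair $A=B=\1$ into the asymptotic multiplicativity \eqref{eq:mult-2} then provides the only analytic input we need, namely that
\[
\delta_n:=\|e_n-e_n^2\|_{2,\mathrm{tr}}=\|\varphi_n(\1)-\varphi_n(\1)\varphi_n(\1)\|_{2,\mathrm{tr}}
\]
satisfies $\lim_n\delta_n=0$; that is, $e_n$ is an approximate projection in the $\|\cdot\|_{2,\mathrm{tr}}$-norm.

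Next I would pass to the spectral distribution $\mu_n$, the Borel probability measure on $[0,1]$ determined by $\mathrm{tr}(f(e_n))=\int_0^1 f\,d\mu_n$. Because both $e_n-e_n^2$ and $e_n-P_n$ are self-adjoint functions of $e_n$, the two relevant norms become one-dimensional integrals,
\[
\delta_n^2=\int_0^1\lambda^2(1-\lambda)^2\,d\mu_n(\lambda),\qquad \|e_n-P_n\|_{2,\mathrm{tr}}^2=\int_0^1\bigl(\lambda-\chi_{[1-\varepsilon_n,1]}(\lambda)\bigr)^2\,d\mu_n(\lambda).
\]
Splitting the second integral at $1-\varepsilon_n$, on $[1-\varepsilon_n,1]$ the integrand equals $(\lambda-1)^2\leq\varepsilon_n^2$, whereas on $[0,1-\varepsilon_n)$ the indicator vanishes and the inequality $1-\lambda>\varepsilon_n$ gives $\lambda^2<\lambda^2(1-\lambda)^2/\varepsilon_n^2$. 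Combining these yields the master estimate
\[
\|e_n-P_n\|_{2,\mathrm{tr}}^2\leq \varepsilon_n^2+\frac{1}{\varepsilon_n^2}\int_0^1\lambda^2(1-\lambda)^2\,d\mu_n=\varepsilon_n^2+\frac{\delta_n^2}{\varepsilon_n^2}.
\]

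It remains to calibrate the cut-off, and I expect this to be the only genuinely delicate point. The master estimate shows the conclusion holds as soon as $\varepsilon_n\to0$ is taken to tend to zero slowly relative to $\delta_n$; the balanced choice $\varepsilon_n=\sqrt{\delta_n}$ (trivially modified when $\delta_n=0$) gives $\|e_n-P_n\|_{2,\mathrm{tr}}^2\leq 2\delta_n\to0$. The estimate is worthless if $\varepsilon_n\to0$ too fast: a single eigenvalue of $e_n$ located at distance of order $\delta_n$ below $1$ forces $P_n=0$ and $\|e_n-P_n\|_{2,\mathrm{tr}}\to1$ once $\varepsilon_n$ falls below that distance, so one must ensure $\delta_n\ll\varepsilon_n\to0$. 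Everything else is the routine functional-calculus bookkeeping recorded above, once the positivity and contractivity of $e_n$ have been used to place its spectrum in $[0,1]$.
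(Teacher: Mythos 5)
Your proof is correct and follows essentially the same route as the paper: asymptotic multiplicativity applied at $A=B=\1$ makes $e_n$ an approximate projection in $\|\cdot\|_{2,\mathrm{tr}}$, and a Chebyshev-type estimate on the spectral distribution controls the spectral mass away from $\{0,1\}$, with your two-region master inequality $\|e_n-P_n\|_{2,\mathrm{tr}}^2\leq \varepsilon_n^2+\delta_n^2/\varepsilon_n^2$ being a cleaner packaging of the paper's three-region eigenvalue count. Your calibration remark that one needs $\delta_n\ll\varepsilon_n\to 0$ (so the conclusion fails for an arbitrary, too rapidly decaying $\varepsilon_n$) is well taken and corresponds exactly to the paper's step ``by passing to a subsequence if necessary, assume $\frac{1}{k(n)}\sum_s(\lambda_s^{(n)}-(\lambda_s^{(n)})^2)^2\leq\varepsilon_n(\varepsilon_n-\varepsilon_n^2)^2$'', which silently re-indexes the net to achieve the same balance that you obtain by choosing $\varepsilon_n=\sqrt{\delta_n}$.
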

\begin{proof}
Let $\varphi_n\colon\cA\to M_{k(n)}(\C)$ be a net of c.c.p. maps as in Definition~\ref{def:FA}.
Consider the self-adjoint matrix $e_n:=\varphi_n(\1)\in M_{k(n)}(\C)$ and denote its spectrum by 
$\left\{\lambda_1^{(n)},\dots, \lambda_{k(n)}^{(n)}\right\}\subset  [0,1]$. The fact that $\varphi_n$ is asymptotically isometric implies that $\|e_n\|\to 1$, while from asymptotic multiplicativity one obtains
\begin{equation}\label{eq:spreading}
\| e_n-e_n^2\|_{2,\mathrm{tr}}^2=\frac{1}{k(n)}\sum_{s=1}^{k(n)}\left( \lambda_s^{(n)}-\left(\lambda_s^{(n)} \right)^2\right)^2\to 0\;.
\end{equation}
Denoting by $E_n(\cdot)$ the spectral resolution of the identity associated to $e_n$, we may split the spectrum of $e_n$ into three regions (close to $0$, intermediate and close to $1$):
\begin{eqnarray*}
\Lambda_0(n)&:=&\mathrm{supp} \left(E_n\right) \cap \left[0\,,\,\varepsilon_n\right],\;
\Lambda_{\mathrm{mid}}(n):=\mathrm{supp} \left(E_n\right) \cap \left(\varepsilon_n\,,\,1-\varepsilon_n \right)\;\;  \mathrm{and}\\
\Lambda_1(n)&:=&\mathrm{supp} \left(E_n\right)\cap  \left[1-\varepsilon_n\,,\, 1 \right].
\end{eqnarray*}
Note that $k(n)=|\Lambda_{0}(n)|+|\Lambda_{\mathrm{mid}}(n)|+|\Lambda_{1}(n)|$ and
since $\|e_n\| \rightarrow 1$ we have that $\Lambda_1(n)\not=\emptyset$ for large $n$.
By definition of the $\|\cdot\|_{2,\mathrm{tr}}$-norm we have
\begin{eqnarray}
 \| e_n-P_n\|_{2,\mathrm{tr}} &=& \frac{1}{k(n)}\left(\sum_{\lambda\in\Lambda_0(n)}\lambda^2 + \sum_{\lambda\in\Lambda_1(n)}(1-\lambda)^2 
                                  + \sum_{\lambda\in\Lambda_{\mathrm{mid}}(n)}\lambda^2\right) \nonumber \\[3mm]                                                               
                              &\leq& \frac{|\Lambda_0(n)|}{k(n)} \frac{\varepsilon_n}{n} + \frac{|\Lambda_1(n)|}{k(n)} \frac{\varepsilon_n}{n} 
                                     + \frac{|\Lambda_{\mathrm{mid}}(n)|}{k(n)}\left(1 - \varepsilon_n\right)^2  \nonumber  \\[3mm]             
                              &\leq& \frac{2\varepsilon_n}{n}+ \frac{|\Lambda_{\mathrm{mid}}(n)|}{k(n)}\left(1 - \varepsilon_n\right)^2. \label{eq:estimate}
\end{eqnarray}
Observe that, from Eq.~(\ref{eq:spreading}), we may, by passing to a subsequence if necessary, assume that 
\[
 \frac{1}{k(n)}\sum_{s=1}^{k(n)}\left( \lambda_s^{(n)}-\left(\lambda_s^{(n)} \right)^2\right)^2 \leq \varepsilon_n \cdot \left(\varepsilon_n-\varepsilon_n^2\right)^2
\]
and, therefore, we have 
\[
 \varepsilon_n \cdot \left(\varepsilon_n-\varepsilon_n^2\right)^2 \geq \frac{1}{k(n)} \sum_{\lambda\in\Lambda_{\mathrm{mid}}(n)}\left(\lambda-\lambda^2\right)^2
              \geq \frac{|\Lambda_{\mathrm{mid}}(n)|}{k(n)} \left(\varepsilon_n-\varepsilon_n^2\right)^2
\]
or, equivalently,
\[
 \frac{|\Lambda_{\mathrm{mid}}(n)|}{k(n)}\leq \; \varepsilon_n.
\]
Using this estimate in the r.h.s. of Eq.~(\ref{eq:estimate}) above we obtain finally the claim, i.e., $\lim_{n}\| e_n-P_n\|_{2,\mathrm{tr}}=0$.
\end{proof}

\begin{proposition}\label{prop:unital}
Let $\cA$ be a unital F\o lner C*-algebra. Then there is a net of u.c.p. maps $\psi_n\colon\cA\to M_{k(n)}(\C)$ which is both
asymptotically multiplicative $\|\cdot\|_{2,\mathrm{tr}}$-norm and asymptotically isometric in C*-norm.
\end{proposition}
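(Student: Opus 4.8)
The plan is to use Lemma~\ref{lem:spreading} to replace each $\varphi_n$ by its compression to the spectral subspace of $e_n$ on which $e_n$ is invertible and close to the identity, and then to renormalise so that the unit is preserved exactly. Concretely, let $\varphi_n$, $e_n=\varphi_n(\1)$ and $P_n=E_n([1-\varepsilon_n,1])$ be as in the lemma, write $m(n):=\Rank(P_n)$, and let $V_n\colon\C^{m(n)}\to\C^{k(n)}$ be the isometry with $V_nV_n^*=P_n$ and $V_n^*V_n=\1_{m(n)}$. Since $P_n$ is spectral for $e_n$, the positive matrix $g_n:=V_n^*e_nV_n\in M_{m(n)}(\C)$ has spectrum in $[1-\varepsilon_n,1]$, hence is invertible with $\|g_n^{-1/2}-\1_{m(n)}\|\le(1-\varepsilon_n)^{-1/2}-1\to 0$. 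I would then define
\[
\psi_n\colon\cA\to M_{m(n)}(\C),\qquad \psi_n(A):=g_n^{-1/2}\,V_n^*\varphi_n(A)V_n\,g_n^{-1/2}.
\]
Each $\psi_n$ is completely positive as a composition of the compression $X\mapsto V_n^*XV_n$ with the conjugation by $g_n^{-1/2}$, and it is unital because $\psi_n(\1)=g_n^{-1/2}V_n^*e_nV_ng_n^{-1/2}=g_n^{-1/2}g_ng_n^{-1/2}=\1_{m(n)}$.

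For asymptotic multiplicativity I would first discard the renormalisation: since $\|g_n^{-1/2}-\1\|\to0$ and the $\varphi_n(A)$ are uniformly bounded, $\psi_n(A)$ differs from $V_n^*\varphi_n(A)V_n$ by a quantity that is $o(1)$ in operator norm, hence also in $\|\cdot\|_{2,\mathrm{tr}}$. Thus it suffices to control $V_n^*\varphi_n(AB)V_n-V_n^*\varphi_n(A)V_nV_n^*\varphi_n(B)V_n = V_n^*\big(\varphi_n(AB)-\varphi_n(A)P_n\varphi_n(B)\big)V_n$. Replacing $P_n$ by $e_n$ costs $\|\varphi_n(A)(e_n-P_n)\varphi_n(B)\|_{2,\mathrm{tr}}\le\|\varphi_n(A)\|\,\|e_n-P_n\|_{2,\mathrm{tr}}\,\|\varphi_n(B)\|\to0$ by Lemma~\ref{lem:spreading}, and then $\varphi_n(A)e_n\varphi_n(B)=\varphi_n(A)\varphi_n(\1)\varphi_n(B)$ is $\|\cdot\|_{2,\mathrm{tr}}$-close to $\varphi_n(A)\varphi_n(B)$ because $\|\varphi_n(\1)\varphi_n(B)-\varphi_n(B)\|_{2,\mathrm{tr}}\to0$ by the asymptotic multiplicativity of $\varphi_n$ applied to $\1\cdot B$. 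In this way the defect of $\psi_n$ is reduced to $V_n^*\big(\varphi_n(AB)-\varphi_n(A)\varphi_n(B)\big)V_n$, i.e. to the defect of $\varphi_n$, which vanishes by hypothesis.

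For asymptotic isometry the upper bound $\|\psi_n(A)\|\le\|A\|$ is automatic, since a u.c.p. map is contractive. For the lower bound I would use the identity $V_ng_n^{1/2}\psi_n(A)g_n^{1/2}V_n^*=P_n\varphi_n(A)P_n$, which gives $\|\psi_n(A)\|\ge\|g_n^{1/2}\|^{-2}\|P_n\varphi_n(A)P_n\|\ge\|P_n\varphi_n(A)P_n\|$; it then remains to see that compressing by $P_n$ does not destroy the norm, i.e. that $\|P_n\varphi_n(A)P_n\|\to\|A\|$, and to combine this with $\|\varphi_n(A)\|\to\|A\|$.

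The main obstacle is twofold, and both difficulties stem from the fact that Lemma~\ref{lem:spreading} only controls $e_n-P_n$ in the averaged norm $\|\cdot\|_{2,\mathrm{tr}}$, and not in operator norm. First, the target trace changes from the $k(n)$- to the $m(n)$-normalised trace, and one only has $\|V_n^*XV_n\|_{2,\mathrm{tr},M_{m(n)}}^2\le\frac{k(n)}{m(n)}\|X\|_{2,\mathrm{tr},M_{k(n)}}^2$, so the reduction of the second paragraph yields a genuine estimate only once the ratio $k(n)/m(n)$ is kept bounded; here I would use that $\tfrac{m(n)}{k(n)}=\tr_{k(n)}(P_n)=\tr_{k(n)}(e_n)+o(1)$ (again by Lemma~\ref{lem:spreading}, via $|\tr_{k(n)}(e_n-P_n)|\le\|e_n-P_n\|_{2,\mathrm{tr}}$) and arrange, by refining the net or by choosing $\varepsilon_n\to0$ slowly, that $\liminf_n\tr_{k(n)}(e_n)>0$. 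Second, and more seriously, the operator-norm lower bound $\|P_n\varphi_n(A)P_n\|\to\|A\|$ is not a formal consequence of the $\|\cdot\|_{2,\mathrm{tr}}$-concentration of $\varphi_n(A)$ on $\Ran P_n$; showing that the operator norm is asymptotically carried by the top spectral subspace of $e_n$ is the crux of the argument, and I expect it to require either a peaking argument combined with the asymptotic multiplicativity and isometry of $\varphi_n$, or enlarging $\psi_n$ by a direct summand that manifestly retains the operator norm.
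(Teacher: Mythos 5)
Your construction of $\psi_n$ is precisely the paper's: the paper sets $f_n:=(P_ne_n)^{-1/2}$ and $\psi_n(A):=f_n\varphi_n(A)f_n$ in the corner $P_nM_{r(n)}(\C)P_n$, which is your $g_n^{-1/2}V_n^*\varphi_n(A)V_ng_n^{-1/2}$ in disguise, and your second paragraph is a fleshed-out version of the paper's one-line appeal to Lemma~\ref{lem:spreading} together with $\|f_n-P_n\|\to 0$. The problem is that you stop short of a proof at exactly the two points you flag, and your proposed repair of the first one is not available. The bound $\liminf_n\mathrm{tr}_{k(n)}(e_n)>0$ cannot be arranged ``by refining the net or by choosing $\varepsilon_n\to 0$ slowly'': passing to a subnet does not change $e_n$, and since $e_n\geq(1-\varepsilon_n)P_n$ one always has $\mathrm{tr}(P_n)\leq\mathrm{tr}(e_n)/(1-\varepsilon_n)$, so no choice of cut-off helps once $\mathrm{tr}(e_n)\to 0$. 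And that can happen along an entire legitimate witnessing net: pad any witnessing net as $\varphi_n\oplus 0_{N(n)}$ with $N(n)\gg k(n)$; this is still c.c.p., still asymptotically isometric (the operator norm of a direct sum is the maximum), and its $\|\cdot\|_{2,\mathrm{tr}}$-defects only shrink, yet $\mathrm{tr}(e_n)\to 0$. So the transfer factor $k(n)/m(n)$ you correctly identify needs a genuinely different argument. (In fairness, the paper's own proof glosses over this same renormalization point; a clean way to close it is to route through amenable traces, i.e.\ Theorem~\ref{teo:Kirchberg}, rather than through spectral surgery alone.)

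On asymptotic isometry you misidentify the crux: neither the paper nor any correct proof establishes $\|P_n\varphi_n(A)P_n\|\to\|A\|$, and indeed this can fail, since the operator norm of $\varphi_n(A)$ may be carried by spectral subspaces of $e_n$ with eigenvalues bounded away from $1$. What the paper actually does is your second, undeveloped alternative: the direct-sum trick of \cite[Proposition~4.2]{AL14}. Given u.c.p.\ maps $\psi_n$ that are asymptotically multiplicative in $\|\cdot\|_{2,\mathrm{tr}}$, one replaces $\psi_n$ by $\Psi_n:=\psi_n^{\oplus N_n}\oplus\sigma_n$, where $\sigma_n(A)=P\pi(A)P$ is a finite-rank compression of a faithful unital representation chosen so that $\|\sigma_n(A)\|\geq\|A\|-\tfrac1n$ on the relevant finite set, and $N_n$ is so large that the $\sigma_n$-block has negligible normalized-trace weight. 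Because the operator norm of a direct sum is the maximum of the blocks while the $\|\cdot\|_{2,\mathrm{tr}}$-defect is a trace-weighted average, $\Psi_n$ becomes asymptotically isometric at no cost to asymptotic multiplicativity --- no peaking argument and no norm lower bound for $P_n\varphi_n(A)P_n$ is ever needed. So your skeleton matches the paper's, but as written the proposal is incomplete: the isometry half is resolved by a known trick you mention but do not execute, and the trace-renormalization half is a real gap for which your suggested fix fails.
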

\begin{proof}
Let $\varphi_n\colon\cA\to M_{r(n)}(\C)$ be a net of c.c.p. maps witnessing the F\o lner condition of Definition~\ref{def:FA}.
As in the preceding lemma we define $e_n:=\varphi_n(\1)\in M_{r(n)}(\C)$ and denote its spectral resolution by $E_n(\cdot)$. Putting
$P_n:=E_n\left( [1-\frac{1}{n}\,,\, 1] \right)$ note that $P_n e_n$ is an invertible element in $P_n M_{r(n)}(\C) P_n$ and, by functional calculus, we can 
introduce the matrices $f_n:=\left(P_n e_n\right)^{-\frac{1}{2}}$. Finally defining $k(n):=\mathrm{ran} P_n$ we have that the c.c.p. maps given by
\[
 \psi_n\colon\cA\to M_{k(n)}(\C)\;,\quadtext{with} \psi_n(A):= f_n\varphi_n(A) f_n
\]
satisfy the claim. In fact, note that the maps are unital since for any $n$ we have $\psi(\1)=f_n e_n f_n=f_n^2 e_n=P_n$, which is the unit of $P_n M_{r(n)} P_n$.
Asymptotic multiplicativity in $\|\cdot\|_{2,\mathrm{tr}}$-norm follows from Lemma~\ref{lem:spreading} and the fact that, by functional calculus, we have
$\|f_n-P_n\|\to 0$. Finally, note that, in the unital case, the condition of asymptotic isometry can be obtained just by taking direct sums of the
u.c.p. maps $\psi_n$ (see \cite[Proposition~4.2]{AL14} for details).
\end{proof}

\begin{remark}
The previous result shows that for unital C*-algebras c.c.p. maps may be replaced by {\em unital completely positive} (u.c.p.) maps, i.e., completely positive maps sending $\1$ to the matrix unit. Note that the proof requires a different reasoning as in the quasidiagonal case (cf., \cite[Lemma~7.1.4]{bBrown08}) since the asymptotic multiplicative condition in the $\|\cdot\|_{2,\mathrm{tr}}$-norm (i.e., in mean) allows much more general behavior of the spectrum of $\varphi_n(\1)$. In fact, contrary to the quasidiagonal case, the proof of Lemma~\ref{lem:spreading} shows that in general $\Lambda_{\mathrm{mid}}(n)$ may always be non-empty.
\end{remark}

As many C*-algebras used in the physical literature appear as inductive limits of simpler C*-algebras we show next that the class of F\o lner C*-algebras is stable under inductive limits as long as the connecting maps are injective. 

\begin{proposition}\label{pro:inductive}
 The inductive limit of F\o lner C*-algebras with injective connecting maps is also a F\o lner C*-algebra.
\end{proposition}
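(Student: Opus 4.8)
The plan is to assemble the approximating net for the limit $\cA=\varinjlim\cA_i$ out of the nets witnessing the F\o lner condition of the building blocks $\cA_i$, the decisive tool being that matrix algebras are injective, so that a c.c.p. map defined on a subalgebra extends to the whole algebra. Since the connecting maps are injective, the canonical maps $\cA_i\to\cA$ are isometric $*$-monomorphisms; I identify each $\cA_i$ with its image and put $\cA_\infty:=\bigcup_i\cA_i$, a dense $*$-subalgebra of $\cA$. Because $\|F\|_{2,\mathrm{tr}}\le\|F\|$ holds for every $F\in M_k(\C)$ and every c.c.p. map is contractive, it suffices to produce, for each finite set $F\subset\cA_\infty$ and each $\varepsilon>0$, a single c.c.p. map $\cA\to M_{k}(\C)$ that is $\varepsilon$-multiplicative in $\|\cdot\|_{2,\mathrm{tr}}$ and $\varepsilon$-isometric in operator norm on $F$; indexing these over the directed set of pairs $(F,\varepsilon)$ (ordered by $F\subseteq F'$ and $\varepsilon\ge\varepsilon'$) then yields the required net.

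First I would fix $(F,\varepsilon)$. As $F$ is finite and the index set is directed, there is a single index $i$ with $F\subset\cA_i$. Since $\cA_i$ is a F\o lner C*-algebra, Definition~\ref{def:FA} supplies a c.c.p. map $\varphi\colon\cA_i\to M_{k}(\C)$ with $\|\varphi(AB)-\varphi(A)\varphi(B)\|_{2,\mathrm{tr}}<\varepsilon$ and $\big|\,\|\varphi(A)\|-\|A\|\,\big|<\varepsilon$ for all $A,B\in F$. The main step, and the only delicate one, is to promote $\varphi$ to a c.c.p. map on all of $\cA$ without losing these estimates: because $M_k(\C)=\cB(\C^k)$ is injective, Arveson's extension theorem provides a c.c.p. map $\widetilde\varphi\colon\cA\to M_k(\C)$ with $\widetilde\varphi|_{\cA_i}=\varphi$. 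Since $F$ and all products $AB$ with $A,B\in F$ already lie in $\cA_i$, the extension $\widetilde\varphi$ satisfies the two $\varepsilon$-estimates verbatim; set $\psi_{(F,\varepsilon)}:=\widetilde\varphi$. This is precisely where both injectivity hypotheses enter: injectivity of the connecting maps makes $\cA_i$ an honest C*-subalgebra of $\cA$, and injectivity of the target $M_k(\C)$ drives Arveson's theorem.

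It remains to pass from $\cA_\infty$ to all of $\cA$, which is routine. Given arbitrary $A,B\in\cA$ and $\eta>0$, I would pick $A',B'\in\cA_\infty$ with $\|A-A'\|$ and $\|B-B'\|$ small; writing $AB-A'B'=A(B-B')+(A-A')B'$, using contractivity of the maps $\psi_\lambda$ together with $\|\cdot\|_{2,\mathrm{tr}}\le\|\cdot\|$, the quantities $\|\psi_\lambda(AB)-\psi_\lambda(A'B')\|_{2,\mathrm{tr}}$ and $\|\psi_\lambda(A)\psi_\lambda(B)-\psi_\lambda(A')\psi_\lambda(B')\|_{2,\mathrm{tr}}$ become uniformly small in the index $\lambda$. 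Combining these with the asymptotic multiplicativity already available for $A',B',A'B'\in\cA_\infty$ gives asymptotic multiplicativity for $A,B$, and the analogous simpler estimate yields asymptotic isometry. Hence $\cA$ satisfies both conditions of Definition~\ref{def:FA} and is a F\o lner C*-algebra.
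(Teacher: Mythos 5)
Your proposal is correct and follows essentially the same route as the paper's proof: reduce to a finite set $\fF$ inside the dense union $\cA_\infty$, which lies in a single building block $\cA_{n_0}$, take a c.c.p.\ map witnessing the F\o lner condition there, and extend it to all of $\cA$ by Arveson's extension theorem, using injectivity of the connecting maps to view each $\cA_{n_0}$ as a genuine C*-subalgebra. The only difference is that you spell out the routine density estimates (passing from $\cA_\infty$ to $\cA$ via contractivity and $\|\cdot\|_{2,\mathrm{tr}}\le\|\cdot\|$) that the paper dismisses as ``clear.''
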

\begin{proof}
Consider the inductive limit $\cA=\mathop{\lim}\limits_{\longrightarrow} \cA_n$ of F\o lner C*-algebras $\cA_n$ with injective connecting maps. Given $\varepsilon>0$ and a finite subset $\fF\subset \cA_\infty$ we may, without loss of generality, assume that $\fF \subset \cA_{n_0}$ for a sufficiently large $n_0$. Let 
$\varphi_{n_0}\colon\cA_{n_0}\to M_{k(n_0)}(\C)$ be a c.c.p. map witnessing the F\o lner condition for $\cA_{n_0}$, and note that, by Arveson's extension theorem (cf., \cite[Theorem~1.6.1]{bBrown08}),
there is a an extension to a c.c.p. map $\overline{\varphi_{n_0}}\colon\cA \to M_{k(n_0)}(\C)$. It is then clear that the map $\overline{\varphi_{n_0}}$ witnesses the F\o lner property of $\cA$ with respect to $\varepsilon, \cF$, which proves the claim.  
\end{proof}

The class of F\o lner C*-algebras has a rich variety of equivalent characterizations. We begin mentioning the relation with an algebraic version of amenability which 
will be needed in the analysis of the Weyl algebra in the next section. We refer to \cite[Section~3]{ALLW-1} and \cite{Bartholdi2008,Elek03,Gromov99} for additional results and motivation.
We will be interested in the case of *-subalgebras of C$^*$-algebras, but the definition and the results in the references mentioned above are true for arbitrary algebras over arbitrary fields.

\begin{definition}\label{def:alg-amenable}
Let $\fA\subset\cA$ be a *-subalgebra of a C$^*$-algebra $\cA$. We say $\mathfrak{A}$ is \textit{algebraically amenable} if there is 
a net $\left\{V_n\right\}_n$ of nonzero finite dimensional subspaces of $\mathfrak{A}$ satisfying 
\[
 \lim_{n}\frac{\mathrm{dim}(AV_n+V_n)}{\mathrm{dim}(V_n)}=1 \;\; \text{for every} \;\; A\in \mathfrak{A}.
\]
\end{definition}

Next we mention an important relation between algebraic amenability and the
class of F\o lner C$^*$-algebras.
For a complete proof we refer to \cite[Theorem~3.17]{ALLW-2}.
\begin{theorem}\label{teo:alg-amenable-Foelner}
Let $\mathfrak{A}\subset \cA$ be a dense *-subalgebra of a unital separable C$^*$-algebra $\cA$.
If $\mathfrak{A}$ is algebraically amenable, then $\cA$ is a F\o lner C$^*$-algebra.
\end{theorem}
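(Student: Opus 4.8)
The plan is to realise the algebraic F\o lner net of subspaces inside a Hilbert space and to turn the resulting off-diagonal smallness into the two defining conditions of Definition~\ref{def:FA} by compression. First I would fix a faithful state $\omega$ on the separable unital C*-algebra $\cA$ (these always exist in the separable case) and pass to its GNS data $(\pi,\cH,\Lambda)$. Faithfulness of $\omega$ makes $\Lambda\colon\cA\to\cH$ injective and, since $\cA$ is unital, forces $\pi$ to be a faithful representation (if $\pi(a)=0$ then $\Lambda(a)=\pi(a)\Lambda(\1)=0$, so $a=0$); in particular $\|\pi(A)\|=\|A\|$ for all $A$. Writing $\widehat V_n:=\Lambda(V_n)$ for the images of the algebraically amenable net $\{V_n\}_n\subset\mathfrak{A}$ and letting $Q_n$ be the orthogonal projection of $\cH$ onto $\widehat V_n$, the relation $\pi(A)\Lambda(b)=\Lambda(Ab)$ together with injectivity of $\Lambda$ gives $\pi(A)\widehat V_n+\widehat V_n=\Lambda(AV_n+V_n)$ and $\dim\widehat V_n=\dim V_n$, so the algebraic F\o lner condition transports verbatim: $\dim\big(\pi(A)\widehat V_n+\widehat V_n\big)/\dim\widehat V_n\to 1$ for every $A\in\mathfrak{A}$.

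The core is a dimension count controlling the off-diagonal blocks $b_n(A):=(1-Q_n)\pi(A)Q_n$. Since $\widehat V_n\subseteq W:=\pi(A)\widehat V_n+\widehat V_n$, the identity $(1-Q_n)W=W\cap\widehat V_n^{\perp}$ has dimension $\dim W-\dim\widehat V_n$, and as $\Ran b_n(A)\subseteq (1-Q_n)W$ this yields $\Rank b_n(A)\le \dim\big(\pi(A)\widehat V_n+\widehat V_n\big)-\dim\widehat V_n$. Combining with $\|b_n(A)\|_{2,\mathrm{tr}}^2=\tfrac{1}{k(n)}\mathrm{Tr}\big(b_n(A)^*b_n(A)\big)\le \tfrac{1}{k(n)}\,\Rank b_n(A)\,\|A\|^2$, where $k(n):=\dim V_n$, the transported F\o lner condition forces $\|b_n(A)\|_{2,\mathrm{tr}}\to 0$ for all $A\in\mathfrak{A}$. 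I would then take the unital completely positive compressions $\varphi_n\colon\cA\to\cB(\widehat V_n)\cong M_{k(n)}(\C)$, $\varphi_n(A):=Q_n\pi(A)Q_n$. A one-line computation gives $\varphi_n(AB)-\varphi_n(A)\varphi_n(B)=Q_n\pi(A)(1-Q_n)\,b_n(B)$, whose $\|\cdot\|_{2,\mathrm{tr}}$-norm is at most $\|A\|\,\|b_n(B)\|_{2,\mathrm{tr}}\to 0$; asymptotic multiplicativity for arbitrary $A,B\in\cA$ then follows from density of $\mathfrak{A}$ and contractivity of the $\varphi_n$ by a routine three-$\varepsilon$ argument.

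The main obstacle is asymptotic isometry~\eqref{eq:norm}: compressions only satisfy $\|\varphi_n(A)\|\le\|A\|$, and a priori the subspaces $\widehat V_n$ could concentrate where $\pi(A)$ is small, so the F\o lner compressions need not recover the norm. To repair this I would run the direct-sum argument of \cite[Proposition~4.2]{AL14}. Choosing finite-rank projections $R_m\uparrow\1$ on $\cH$, the compressions $A\mapsto R_m\pi(A)R_m$ are asymptotically isometric (because $R_m\to\1$ strongly and $\pi$ is faithful) though not asymptotically multiplicative. Indexing a net by pairs $(\fF,\varepsilon)$ with $\fF\subset\cA$ finite, I would form the block-diagonal u.c.p. maps $\psi=\varphi_n^{\oplus N}\oplus\big(R_m\pi(\cdot)R_m\big)$, choosing $m$ so the isometric block is $\varepsilon$-isometric on $\fF$, then $n$ so the F\o lner block has $\|\cdot\|_{2,\mathrm{tr}}$-defect below $\varepsilon$ on $\fF$, and finally $N$ large enough that the isometric block carries negligible normalised-trace weight. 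The operator norm of $\psi(A)$, being the maximum over the blocks, recovers $\|A\|$ up to $\varepsilon$, while the multiplicativity defect in the $\|\cdot\|_{2,\mathrm{tr}}$-norm is a dimension-weighted average in which the bounded defect of the non-multiplicative isometric block is crushed by the dominant F\o lner block. This produces a net of u.c.p. maps that is simultaneously asymptotically multiplicative in mean and asymptotically isometric in C*-norm, exhibiting $\cA$ as a F\o lner C*-algebra.
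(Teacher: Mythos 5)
Your proposal is correct and follows essentially the same route as the proof the paper defers to in \cite[Theorem~3.17]{ALLW-2}: the GNS representation of a faithful state (which exists by separability) converts the almost-invariant subspaces $V_n$ into finite-rank projections $Q_n$ whose off-diagonal corners $(1-Q_n)\pi(A)Q_n$ are small in normalized Hilbert--Schmidt norm via the same rank count, and compressions then witness asymptotic multiplicativity in $\|\cdot\|_{2,\mathrm{tr}}$. Your repair of asymptotic isometry by block-diagonal amplification with asymptotically isometric compressions is precisely the device of \cite[Proposition~4.2]{AL14}, which the paper itself invokes in the proof of Proposition~\ref{prop:unital}, so the argument is complete as it stands.
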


We conclude this brief survey mentioning amenable traces, which are also in close relationship with the class of F\o lner C*-algebras.
Recall that a \textit{tracial state} on a C$^*$-algebra $\cA$ is a positive and normalized functional $\tau\colon\cA\to\C$
that satisfies the usual tracial property $\tau(AB)=\tau(BA)$ for any $A,B\in\cA$. In the next definition we specify 
the subclass of amenable traces (see, e.g., \cite[Chapter~6]{bBrown08}). 

\begin{definition}
Let $\cA\subset\BH$ be a unital C*-algebra. An {\em amenable trace} $\tau$ on $\cA$ is a tracial state on $\cA$ that 
extends to a state $\psi$ on $\BH$ that has $\cA$ in its centralizer, i.e.,
\[
   \tau=\psi|_{\cA}\quad\mathrm{and}\quad \psi(XA)=\psi(AX) \;\; \text{for every} \;\; A\in\cA \;\text{and}\; X\in\BH.
\]
\end{definition}

\begin{remark}
\begin{itemize}
 \item[(i)] Kirchberg uses in \cite[Proposition~3.2]{K94} the name \textit{liftable} trace instead of amenable trace. Moreover, the state $\psi$ in 
the preceding definition is called a {\em hypertrace} in the literature, and the class of F\o lner C*-algebras is also referred to
as \textit{weakly hypertracial} (see \cite{Bedos95} and references therein). The hypertrace $\psi$ can be interpreted as an operator-algebraic generalization 
of the invariant mean of an amenable group.
 \item[(ii)] It is well known (see, e.g., Proposition~6.2.2 in \cite{bBrown08}) that the definition of amenable trace does not depend on the choice of the embedding $\cA\subset\BH$.
 \item[(iii)] Nuclearity and F\o lner type conditions for C*-algebras are, in general, independent notions. 
The relation between the class of nuclear and F\o lner C*-algebras is given by the following statement (see \cite[Corollary~4.8]{AL14}). 
Let $\cA$ be a unital nuclear C*-algebra. Then $\cA$ is a F\o lner C*-algebra iff $\cA$ admits a tracial state. 
Recall also that any trace on a nuclear C*-algebra is amenable (cf. \cite[Proposition~6.3.4]{bBrown08}). We will relate these results when 
analyzing the resolvent algebras in the final section.
 \item[(iv)] However, even though nuclearity and F\o lner type conditions are, in general, independent, they do agree in the class of C*-algebras associated from discrete groups. Indeed, Lance proved in~\cite[Theorem~4.2]{Lance1973} that $C_r^*(G)$ is nuclear iff $G$ is amenable, which happens iff $C_r^*(G)$ is F\o lner in the sense of Definition~\ref{def:FA}.
\end{itemize}
\end{remark}

The following result gives a first relation between unital F\o lner C*-algebras and amenable traces (see \cite{K94} and \cite[Theorem~3.1.6]{Brown06}). 
Observe that we consider unital C*-algebras since the Weyl and resolvent algebras are unital.
Recall from the proof of Proposition~\ref{prop:unital} that if the algebra is unital then any net of u.c.p. maps which is asymptotically multiplicative in mean can be
turned into a net of asymptotically u.c.p. maps which are, in addition, asymptotically isometric in the C*-norm. The following results shows that amenable traces may be approximated in the weak*-topology by matrix traces.

\begin{theorem}\label{teo:Kirchberg}
 Let $\cA$ be a unital C*-algebra and let $\tau$ be a tracial state on $\cA$. Then $\tau$ is an amenable trace if and only if there is a net of u.c.p. maps
 $\varphi_n\colon\cA\to M_{k(n)}(\C)$ which are asymptotically multiplicative in the $\|\cdot\|_{2,\mathrm{tr}}$-norm (cf. Eq.~(\ref{eq:mult-2})) and where 
 $(\mathrm{tr}\circ\varphi_n)(A)\to \tau(A)$ for all $A\in\cA$.  
\end{theorem}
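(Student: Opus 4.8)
The plan is to prove the equivalence in two directions, translating the abstract hypertrace condition into an explicit finite‑dimensional approximation and back. \emph{From an amenable trace to the matrix approximation.} Suppose $\tau$ is amenable, witnessed by a hypertrace $\psi$ on $\BH$ that restricts to $\tau$ and has $\cA$ in its centralizer. Since the normal states are weak*-dense in the state space of $\BH$, I would first approximate $\psi$ by density operators, and then exploit the centralizer condition $\psi(XA)=\psi(AX)$ via a convexity argument: applying Mazur's lemma to the affine map $\rho\mapsto\bigl((\mathrm{Tr}(\rho A_j)-\tau(A_j))_j,\,([\rho,A_j])_j\bigr)$ on the convex set of density operators (a tuple for each finite $\fF=\{A_j\}\subset\cA$), weak convergence of the commutators to $0$ upgrades to norm convergence. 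This produces a net of density operators $\rho_n$ with, simultaneously, $\mathrm{Tr}(\rho_n A)\to\tau(A)$ and $\|[\rho_n,A]\|_1\to0$ for every $A\in\cA$. The Powers--St{\o}rmer inequality then yields $\|[\rho_n^{1/2},A]\|_2\to0$ in the Hilbert--Schmidt norm $\|\cdot\|_2$ on $\BH$; it suffices to check this on unitaries, where $\|\rho_n^{1/2}U-U\rho_n^{1/2}\|_2=\|\rho_n^{1/2}-U\rho_n^{1/2}U^*\|_2\le\|\rho_n-U\rho_n U^*\|_1^{1/2}=\|[\rho_n,U]\|_1^{1/2}$.

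\emph{Building the u.c.p. maps.} After truncating $\rho_n$ to finite rank and approximating its eigenvalues by rationals $t_i=n_i/N_n$ (both routine $\|\cdot\|_1$-small perturbations preserving the two limits above), I would encode the multiplicities $n_i$ by an inflation. Writing $\rho_n=\sum_i t_i\,|\xi_i\rangle\langle\xi_i|$ and fixing an orthonormal system $f_1,\dots,f_{m_n}$ with $m_n=\max_i n_i$, define the isometry $V_n\colon\C^{N_n}\to\cH\otimes\C^{m_n}$ sending the basis vector indexed by $(i,s)$, $1\le s\le n_i$, to $\xi_i\otimes f_s$, and set $\varphi_n(A):=V_n^*(A\otimes\1)V_n$. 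This is manifestly u.c.p. and unital, and its normalized trace is $\mathrm{tr}(\varphi_n(A))=\frac1{N_n}\sum_i n_i\langle\xi_i,A\xi_i\rangle=\mathrm{Tr}(\rho_n A)\to\tau(A)$. For asymptotic multiplicativity one computes the Stinespring defect $\varphi_n(AB)-\varphi_n(A)\varphi_n(B)=V_n^*(A\otimes\1)(\1-V_nV_n^*)(B\otimes\1)V_n$ and bounds its $\|\cdot\|_{2,\mathrm{tr}}$-norm by $\|A\|$ times the Hilbert--Schmidt mass of the off-diagonal blocks of $B$ across the nested spectral projections $R_s$ of $\rho_n$; a Cauchy--Schwarz step identifies this mass with $\|[\rho_n^{1/2},B]\|_2$ up to the bounded factor $2\|B\|$ (the factor $\sqrt{n_k}+\sqrt{n_l}$ being absorbed against $\sum_k n_k=N_n$), so it vanishes by the previous paragraph.

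\emph{From the matrix approximation to an amenable trace.} Conversely, given u.c.p. maps $\varphi_n$ as in the statement, I would extend each to a u.c.p. map $\overline{\varphi_n}\colon\BH\to M_{k(n)}(\C)$ by Arveson's extension theorem (\cite[Theorem~1.6.1]{bBrown08}), form the states $\psi_n:=\mathrm{tr}\circ\overline{\varphi_n}$ on $\BH$, and pass to a weak*-cluster point $\psi$. Along the corresponding subnet $\psi|_\cA=\lim\mathrm{tr}\circ\varphi_n=\tau$. To verify the centralizer condition I would use the Stinespring presentation $\overline{\varphi_n}(Y)=W_n^*\pi_n(Y)W_n$ and the resulting Kadison--Schwarz estimate $\|\overline{\varphi_n}(AX)-\varphi_n(A)\overline{\varphi_n}(X)\|_{2,\mathrm{tr}}^2\le\|X\|^2\,\mathrm{tr}\bigl(\varphi_n(AA^*)-\varphi_n(A)\varphi_n(A)^*\bigr)$, whose right-hand side tends to $0$ since the nonnegative defect is dominated in $\|\cdot\|_{2,\mathrm{tr}}$ by asymptotic multiplicativity in mean. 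Combined with the trace property of $\mathrm{tr}$ on $M_{k(n)}(\C)$, this gives $\psi_n(AX)-\psi_n(XA)\to0$, hence $\psi(AX)=\psi(XA)$ for all $A\in\cA$ and $X\in\BH$, so that $\tau$ is amenable.

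The main obstacle is the forward direction: manufacturing genuinely finite-dimensional, \emph{unital}, trace-compatible maps out of the purely existential centralizer condition. The two delicate points are the passage from weak to norm control of the commutators (the Mazur convexity step, which is what makes the density operators \emph{asymptotically} central rather than merely weakly so) and the inflation construction, whose sole purpose is to convert the $\rho_n$-weighted trace into an honest normalized matrix trace while preserving asymptotic multiplicativity — the latter surviving only because the Cauchy--Schwarz bound absorbs the otherwise dangerous $\sqrt{n_k}+\sqrt{n_l}$ factor. The converse, by contrast, is a comparatively soft weak*-compactness argument once the $2$-norm Kadison--Schwarz estimate is in place.
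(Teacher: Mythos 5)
The paper itself contains no proof of Theorem~\ref{teo:Kirchberg}: it is quoted as a known result from Kirchberg \cite{K94} and Brown \cite[Theorem~3.1.6]{Brown06}, and your argument is a correct reconstruction of precisely the proof given in those references (see also \cite[Theorem~6.2.7]{bBrown08}) --- the weak-to-norm convexity upgrade of the hypertrace to asymptotically central density operators, Powers--St{\o}rmer to pass from $\|\cdot\|_1$ to $\|\cdot\|_2$ control, the rational-eigenvalue/multiplicity-inflation isometry producing unital trace-compatible compressions with the $2\|B\|\,\|[\rho_n^{1/2},B]\|_2$ Cauchy--Schwarz bound on the Stinespring defect, and the Arveson-extension/weak*-cluster-point argument for the converse. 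Since the paper's ``approach'' is exactly this cited proof, your proposal matches it and needs no correction.
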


We conclude this section mentioning some additional characterizations of the class of F\o lner C*-algebras in terms of the different notions mentioned before
(see \cite[Theorem~3.8]{ALLW-2} and \cite[Theorem~1.1]{Bedos95}).

\begin{theorem}\label{teo:equivalence}
Let $\cA$ be a unital C*-algebra. Then the following conditions are equivalent.
\begin{itemize}
 \item[(i)] $\cA$ is a F\o lner C*-algebra.
 \item[(ii)] Every faithful representation $\pi\colon\cA\to\cB(\cH)$ satisfies that $\pi(\cA)$ has an amenable trace.
 \item[(iii)] Every faithful and essential representation $\pi\colon\cA\to\cB(\cH)$ (i.e., $\pi(\cA)\cap \cK(\cH)=\{0\}$)
 satisfies that there is a net of positive trace-class operators $\{\rho_n\}_n\subset\cB(\cH)$ with $\mathrm{Tr}(\rho_n)=1$ and such that
\[
\lim_{n}\|\rho_n \pi(A)-\pi(A)\rho_n\|_1=0 \;\; \text{for every} \;\; A\in\cA,
\]
where $\|\cdot\|_1$ denotes the trace-class norm in $\cB(\cH)$.
% \item[(v)]
\end{itemize}
\end{theorem}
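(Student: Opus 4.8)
The plan is to prove the theorem as a cycle of implications (i)$\Rightarrow$(ii)$\Rightarrow$(iii)$\Rightarrow$(i), using throughout the fact that amenability of a trace is intrinsic and hence independent of the chosen faithful embedding (cf. the Remark following the definition of an amenable trace). This allows one to pass freely between the abstract algebra $\cA$ and its images $\pi(\cA)$.

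For (i)$\Rightarrow$(ii) I would first invoke Proposition~\ref{prop:unital} to replace the defining c.c.p. maps by a net of u.c.p. maps $\psi_n\colon\cA\to M_{k(n)}(\C)$ which is asymptotically multiplicative in the $\|\cdot\|_{2,\mathrm{tr}}$-norm. The matrix traces $\tau_n:=\mathrm{tr}\circ\psi_n$ are states on $\cA$, so by weak*-compactness of the state space they admit a convergent subnet $\tau_{n_\alpha}\to\tau$. Asymptotic multiplicativity together with the trace property of $\mathrm{tr}$ on each $M_{k(n)}(\C)$ forces $\tau(AB)=\tau(BA)$, so $\tau$ is a tracial state; Theorem~\ref{teo:Kirchberg} then identifies $\tau$ as an amenable trace on $\cA$. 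Since amenability does not depend on the embedding, transporting $\tau$ through any faithful representation $\pi$ produces an amenable trace on $\pi(\cA)$.

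For (ii)$\Rightarrow$(iii) I fix a faithful and essential representation $\pi$ and let $\psi$ be a hypertrace on $\BH$ extending the amenable trace, so that $\psi(X\pi(A))=\psi(\pi(A)X)$ for all $X\in\BH$ and $A\in\cA$. Since the normal states are weak*-dense in the state space of $\BH$, I approximate $\psi$ by density operators $\sigma_\beta$; the centralizer condition then yields, for each $A$, that the trace-class commutators $\sigma_\beta\pi(A)-\pi(A)\sigma_\beta$ tend to $0$ in the weak topology of the trace class induced by the pairing with $\BH$. To upgrade this to convergence in $\|\cdot\|_1$ I would apply the standard convexity (Day/Namioka) argument: for a fixed finite subset $\{A_1,\dots,A_m\}\subset\cA$ the tuples of commutators form a convex subset of the trace class whose weak and norm closures coincide, so $0$ lies in the norm closure, and one extracts density operators $\rho_n$ with $\mathrm{Tr}(\rho_n)=1$ and $\|\rho_n\pi(A)-\pi(A)\rho_n\|_1\to 0$.

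For (iii)$\Rightarrow$(i), which I expect to be the main obstacle, I start from the approximately commuting density operators $\rho_n$ and manufacture finite-rank Følner projections. Using the layer-cake decomposition $\rho_n=\int_0^1\chi_{(t,1]}(\rho_n)\,dt$ one obtains $\rho_n\pi(A)-\pi(A)\rho_n=\int_0^1\big[\chi_{(t,1]}(\rho_n),\pi(A)\big]\,dt$, and an averaging argument over the level $t$ produces spectral projections $Q_n:=\chi_{(t_n,1]}(\rho_n)$ satisfying the Følner estimate $\|[Q_n,\pi(A)]\|_2/\sqrt{\dim Q_n}\to 0$. The compressions $\varphi_n(A):=Q_n\pi(A)Q_n$ are c.c.p. maps into $M_{\dim Q_n}(\C)$, and the identity $\varphi_n(AB)-\varphi_n(A)\varphi_n(B)=Q_n\pi(A)(\1-Q_n)\pi(B)Q_n$ together with the Følner estimate yields asymptotic multiplicativity in the $\|\cdot\|_{2,\mathrm{tr}}$-norm. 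The one genuinely delicate point is asymptotic isometry in the C*-norm: the bound $\|\varphi_n(A)\|\le\|A\|$ is automatic, while the reverse inequality uses that $\pi$ is faithful and essential (so that the construction can be taken to be absorbing, e.g. via a Voiculescu-type argument) and may in any case be arranged by passing to direct sums exactly as in the proof of Proposition~\ref{prop:unital}. This verifies Definition~\ref{def:FA} and closes the cycle.
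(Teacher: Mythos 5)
The paper does not actually prove this theorem: it is quoted as a known characterization, with references to \cite[Theorem~1.1]{Bedos95} and \cite[Theorem~3.8]{ALLW-2}. So your proposal should be measured against those literature proofs, and in architecture it matches them: the cycle (i)$\Rightarrow$(ii)$\Rightarrow$(iii)$\Rightarrow$(i) is the standard one. Your (i)$\Rightarrow$(ii) is correct --- the Cauchy--Schwarz estimate $|\mathrm{tr}(F)|\le\|F\|_{2,\mathrm{tr}}$ turns asymptotic multiplicativity of the u.c.p.\ maps from Proposition~\ref{prop:unital} into traciality of any weak* limit point $\tau$ of $\mathrm{tr}\circ\psi_n$, Theorem~\ref{teo:Kirchberg} (applied along the convergent subnet) gives amenability, and embedding-independence transports $\tau$ to any faithful $\pi$. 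Your (ii)$\Rightarrow$(iii) is also correct and is exactly the classical argument: weak*-approximate the hypertrace by density operators, observe via $\mathrm{Tr}(X[\sigma_\beta,\pi(A)])=\mathrm{Tr}(\sigma_\beta(\pi(A)X-X\pi(A)))\to 0$ that the commutator tuples converge weakly to $0$ in the trace class (whose dual is $\BH$, so Mazur/Day convexity upgrades weak to $\|\cdot\|_1$). One small omission in (iii)$\Rightarrow$(i): you should note that faithful essential representations exist (e.g.\ amplify any faithful representation), so that hypothesis (iii) is non-vacuous.

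The one genuine soft spot is the crux of (iii)$\Rightarrow$(i). As literally written, ``layer-cake decomposition plus an averaging argument over the level $t$'' does not produce the Følner projections: from $[\rho_n,\pi(A)]=\int_0^1[\chi_{(t,1]}(\rho_n),\pi(A)]\,dt$ the triangle inequality only gives $\|[\rho_n,\pi(A)]\|_1\le\int_0^1\|[\chi_{(t,1]}(\rho_n),\pi(A)]\|_1\,dt$, i.e.\ the inequality runs in the useless direction, and commutators at different levels could in principle cancel inside the integral. The step is rescued by the Powers--St{\o}rmer/Connes-type inequality
\[
\int_0^\infty \big\|\,[u,\chi_{(t,\infty)}(\rho)]\,\big\|_{2}^{2}\,dt\;\le\;\|[u,\rho]\|_1
\]
for unitaries $u$ (general elements via decomposition into four unitaries), which together with $\int_0^\infty \mathrm{Tr}\big(\chi_{(t,\infty)}(\rho)\big)\,dt=\mathrm{Tr}(\rho)=1$ and a Chebyshev selection of $t_n$ yields $\|[Q_n,\pi(A)]\|_2/\|Q_n\|_2\to 0$; this inequality is the only hard ingredient of the whole theorem and must be invoked explicitly, not subsumed under ``averaging''. (After that, your identity $\varphi_n(AB)-\varphi_n(A)\varphi_n(B)=Q_n\pi(A)(\1-Q_n)\pi(B)Q_n$ with $(\1-Q_n)\pi(B)Q_n=(\1-Q_n)[\pi(B),Q_n]$ does give asymptotic multiplicativity in mean, and the weighted direct-sum trick of Proposition~\ref{prop:unital} --- a small norm-capturing block of asymptotically negligible relative trace --- repairs the isometry; no Voiculescu-type absorption is needed.) Note finally that you could bypass the delicate step altogether: any weak* limit point $\psi$ of $\mathrm{Tr}(\rho_n\,\cdot\,)$ on $\BH$ is already a hypertrace, since $|\mathrm{Tr}(\rho_n X\pi(A))-\mathrm{Tr}(\rho_n\pi(A)X)|\le\|[\pi(A),\rho_n]\|_1\|X\|$, so $\tau=\psi|_{\pi(\cA)}$ is an amenable trace and Theorem~\ref{teo:Kirchberg} plus the same direct-sum trick closes the cycle without ever touching spectral projections.
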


\begin{remark}\label{rem:facts}
We mention some additional useful facts around the class of F\o lner C*-algebras in relation to representations:
\begin{itemize}
\item[(i)] In part (iii) of Theorem~\ref{teo:equivalence} above one can choose the $\rho_n$ to have finite dimensional range, and 
also use equivalently the Hilbert-Schmidt norm instead of the trace-class norm.
\item[(ii)] If a nonzero quotient of a unital C*-algebra $\cA$ is F\o lner, then $\cA$ itself
is a F\o lner C*-algebra. In particular, if $\cA$ admits a finite-dimensional representation then it is immediately F\o lner. This property shows, for instance, that the universal
C*-algebra generated by two projections is a F\o lner C*-algebra, as it admits one dimensional representations. This algebra is also 2-subhomogeneous and 
hence also type I (see \cite[IV.1.4.2]{bBlackadar06}).
\item[(iii)] Vaillant introduced in~\cite[Definition~2.1]{Vaillant96} the \textit{F\o lner-Voiculescu condition} on a separable C*-algebra $A$, and related it to growth notions (see also the previous~\cite{KV1992}). It is clear that if $A$ has the F\o lner-Voiculescu condition, in the sense of~\cite{Vaillant96}, then $A$ itself is algebraically amenable, and hence F\o lner in the sense of Definition~\ref{def:FA} by Theorem~\ref{teo:alg-amenable-Foelner} above. However, it is unclear whether these conditions are equivalent. Indeed, note that the F\o lner-Voiculescu condition~\cite{Vaillant96} requires the subspaces $X, Y$ to be contained in $A$, whereas for the F\o lner condition the approximations $\varphi_n$ might not be inner, i.e., coming from subspaces of $A$. This is a subtle difference, very similar to the difference between quasidiagonality and inner quasidiagonality (see~\cite{BlackadarKirchberg2001}).
\end{itemize}
\end{remark}

%%%%%%%%%%%%%%%%%%%%%%%%%%%%%%%%%%%%%%%%%%%%%%%%%%%%%%%%%%%%%%%%%%%%%%%%%%%%%%%%%%%%%%%%%%%%%%%%%%%%%%%%%%%%%%%%%%%%%%%%%%%%%%%%%%%%%%%%%%%%%%%
\section{The Weyl algebra}
\label{sec:3}

Given a non-degenerate symplectic vector space $(X,\sigma)$, the Weyl algebra is defined abstractly as the C*-algebra generated by the unitary elements
(or Weyl elements) $\{W(f)\mid f\in X\}$ satisfying the relations $W(f)^*=W(-f)$ and encoding the canonical commutation relations as
\begin{equation}\label{eq:weyl}
  W(f)W(g)=e^{-\frac{i}{2}\sigma(f,g)} W(f+g)\; f,g\in X\;.
\end{equation}
We denote by $\cW_0(X,\sigma)$ the *-algebra generated by the Weyl elements and by $\cW(X,\sigma)$ the corresponding C*-closure (the \textit{Weyl-algebra}).
Note that the set of Weyl elements forms an (uncountable) basis for $\cW_0(X,\sigma)$.
The Weyl algebra is a unital nonseparable simple C*-algebra, and we refer to \cite{Manuceau68,Petz90,bBratteli02,Lledo04} for proofs, additional results and some applications.

The proof of the fact that the Weyl algebra is a F\o lner C*-algebra exploits the nice multiplicative structure of the Weyl elements that generate this algebra. 

\begin{theorem}
 Let $(X,\sigma)$ be a non-degenerate symplectic vector space. Then the *-algebra $\cW_0(X,\sigma)$ is algebraically amenable. In particular, the Weyl algebra
 $\cW(X,\sigma)$ is a F\o lner C*-algebra.
\end{theorem}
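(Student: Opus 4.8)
The plan is to prove the first assertion---algebraic amenability of $\cW_0(X,\sigma)$---directly, and then obtain the F\o lner property of $\cW(X,\sigma)$ from Theorem~\ref{teo:alg-amenable-Foelner} (in the non-separable refinement discussed below). The whole argument rests on a single structural observation: by the paper's own remark the Weyl elements $\{W(f)\mid f\in X\}$ form a linear basis of $\cW_0(X,\sigma)$, and by \eqref{eq:weyl} left multiplication acts on this basis, up to a scalar of modulus one, as \emph{translation of the index}, namely $W(h)W(f)=e^{-\frac{i}{2}\sigma(h,f)}W(h+f)$. Consequently the combinatorics required by Definition~\ref{def:alg-amenable} is governed entirely by the additive group $(X,+)$, which, being abelian, is amenable and hence admits a F\o lner net.

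Concretely, I would fix a F\o lner net $\{F_\alpha\}_\alpha$ of finite nonempty subsets of the discrete group $(X,+)$, i.e.\ $|KF_\alpha|/|F_\alpha|\to 1$ for every finite $K\subset X$, and set
\[
 V_\alpha:=\operatorname{span}\{W(f)\mid f\in F_\alpha\}\subset\cW_0(X,\sigma),\qquad \dim V_\alpha=|F_\alpha|,
\]
the last equality because distinct Weyl elements are linearly independent. For a general $A=\sum_{j=1}^m c_j\,W(h_j)\in\cW_0(X,\sigma)$ relation \eqref{eq:weyl} gives $A\,W(f)\in\operatorname{span}\{W(h_j+f)\mid 1\le j\le m\}$, whence
\[
 AV_\alpha+V_\alpha\subseteq \operatorname{span}\{W(f)\mid f\in KF_\alpha\},\qquad K:=\{0,h_1,\dots,h_m\}.
\]
Therefore $1\le \dim(AV_\alpha+V_\alpha)/\dim V_\alpha\le |KF_\alpha|/|F_\alpha|\to 1$, which is exactly the condition of Definition~\ref{def:alg-amenable}; thus $\cW_0(X,\sigma)$ is algebraically amenable. (For a fully explicit net one uses that any finite $K$ lies in a finitely generated, hence free abelian, subgroup $\cong\Z^k$ of $(X,+)$ and takes $F_\alpha$ to be large boxes therein.)

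For the second assertion I would feed this into Theorem~\ref{teo:alg-amenable-Foelner}. The one genuine subtlety---and the step I expect to be the main obstacle---is that $\cW(X,\sigma)$ is non-separable, so the cited theorem does not apply verbatim; the remedy is to run the construction underlying it by hand, exploiting that the F\o lner condition for a C*-algebra is tested one finite set of elements at a time. I would represent $\cW(X,\sigma)$ on the GNS space of the canonical tracial state $\tau_0$ determined by $\tau_0(W(f))=\delta_{f,0}$. Since $\cW(X,\sigma)$ is simple this representation $\pi$ is faithful, the vectors $\xi_f:=\pi(W(f))\Omega$ form an orthonormal basis, and $\pi(W(h))\xi_f=e^{-\frac{i}{2}\sigma(h,f)}\xi_{h+f}$.

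Compressing to $H_\alpha:=\overline{\operatorname{span}}\{\xi_f\mid f\in F_\alpha\}$ then yields u.c.p.\ maps $\varphi_\alpha(A):=P_{H_\alpha}\pi(A)P_{H_\alpha}\colon\cW(X,\sigma)\to M_{|F_\alpha|}(\C)$. A short computation (the cocycle identity making all ``interior'' phases cancel) shows that $\varphi_\alpha(W(h)W(g))$ and $\varphi_\alpha(W(h))\varphi_\alpha(W(g))$ differ only on the columns indexed by $\{c\in F_\alpha\mid g+c\notin F_\alpha\}$, whose proportion is $|F_\alpha\setminus(F_\alpha-g)|/|F_\alpha|\to 0$ by the F\o lner property; hence $\|\varphi_\alpha(W(h)W(g))-\varphi_\alpha(W(h))\varphi_\alpha(W(g))\|_{2,\mathrm{tr}}\to 0$, and asymptotic multiplicativity in mean extends to all of $\cW(X,\sigma)$ by bilinearity, density, and contractivity of the $\varphi_\alpha$. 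Finally, asymptotic isometry in the C*-norm is recovered from the faithfulness of $\pi$ by the direct-sum argument of Proposition~\ref{prop:unital} and \cite[Proposition~4.2]{AL14}. This establishes that $\cW(X,\sigma)$ is a F\o lner C*-algebra.
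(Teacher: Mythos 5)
Your proof is correct, and while its first half is morally the paper's argument, the second half takes a genuinely different route. For algebraic amenability the paper exploits exactly the mechanism you identify---left multiplication permutes the basis of Weyl elements up to a phase---but implements it concretely: given $\cF=\{W(g_1),\dots,W(g_n)\}$ it takes $V=\mathrm{span}\{W(k_1g_1+\cdots+k_ng_n)\mid k_i\in\{1,\dots,N\}\}$ with $N>1/\varepsilon$ and counts dimensions, getting $\dim(W(g_i)V+V)/\dim V\le 1+1/N$. Your abstraction to a F\o lner net of the discrete amenable group $(X,+)$ is equivalent (the paper's $V$ is precisely the span over the image of a box), and is in one respect more careful: the paper's counts $\dim V=N^n$ and $\dim(W(g_i)V+V)=\dim V+N^{n-1}$ tacitly assume the sums $k_1g_1+\cdots+k_ng_n$ are pairwise distinct, which can fail when the $g_i$ are rationally dependent, whereas your passage to a finitely generated (hence free abelian) subgroup containing $\{g_1,\dots,g_n\}$ and boxes therein removes this issue. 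Where you truly diverge is in deducing the F\o lner property of the closure: the paper simply invokes Theorem~\ref{teo:alg-amenable-Foelner}, even though that statement is formulated for \emph{separable} C*-algebras while $\cW(X,\sigma)$ is non-separable (the paper defers the non-separable theory to its references). You correctly flag this mismatch and instead build the approximating maps by hand, compressing the GNS representation of the tracial state $\tau_0(W(f))=\delta_{f,0}$ to the subspaces spanned by $\{\xi_f\mid f\in F_\alpha\}$; your $\|\cdot\|_{2,\mathrm{tr}}$ estimate is sound (the defect matrix is supported on at most $|F_\alpha\setminus(F_\alpha-g)|$ columns, each carrying a single entry of modulus at most $1$, so its normalized Hilbert--Schmidt norm tends to $0$ by the F\o lner property), the extension from generators to all of $\cW(X,\sigma)$ by bilinearity, density and contractivity is standard, and the direct-sum repair of asymptotic isometry is the very device the paper itself uses in Proposition~\ref{prop:unital}. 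The only external input you need is that $\tau_0$ extends to a tracial state on the C*-closure and that $\cW(X,\sigma)$ is simple (so the GNS representation is faithful), both well known and also used by the paper in Remark~\ref{rem:approx}. In sum: the paper's route is shorter but leans on a cited theorem whose stated hypotheses it does not satisfy; yours is self-contained on exactly that point and, as a bonus, produces the finite-dimensional approximations of $\cW(X,\sigma)$ completely explicitly.
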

\begin{proof}
To show that the *-algebra $\cW_0(X,\sigma)$ is algebraically amenable we reformulate Definition~\ref{def:alg-amenable} as a local condition. In fact, it is enough to show that
for any $\varepsilon>0$ and any finite $\cF\subset\cW_0(X,\sigma)$ there is a nonzero finite-dimensional subspace $V$ of $\cW_0(X,\sigma)$ such that for all $A\in\cF$ one has
\begin{equation}\label{eq:subspace}
\frac{\dim(AV+V) }{\dim(V)}\leq 1+\varepsilon\;.
\end{equation}
Since any element in $\cF$ is a (finite) linear combination of Weyl elements it suffices to show condition (\ref{eq:subspace}) for $\cF$ being any finite collection of Weyl elements. Consider $\varepsilon>0$ and $\cF:=\{W(g_1),\dots, W(g_n)\}$ for some $g_1,\dots, g_n\in X$. Define the vector space as follows: choose $N> 1/\varepsilon$ and
\[
V:=\mathrm{span} \Big\{ W(k_1 g_1+ \dots +k_n g_n)\mid k_1,\dots k_n \in \{1,\dots N\}\Big\}\;.
\] 
Note that since the Weyl elements are linearly independent we have $\dim(V)= N^n$. Moreover, for any $g_i$, $i=1,\dots, n$, we have
\[
 \dim(W(g_i)V+V)=\dim(V)+N^{n-1}
\]
since only linear combinations of Weyl elements with $k_i=N$ will contribute to the dimension of $ W(g_i)V+V$ not already counted in $V$. Therefore, we have
\[
\frac{\dim(W(g_i)V+V) }{\dim(V)}\leq \left(1+\frac{1}{N}\right)< 1+\varepsilon.
\]

Finally, since $\cW_0(X,\sigma)$ is dense in $\cW(X,\sigma)$ it follows from 
Theorem~\ref{teo:alg-amenable-Foelner} that the Weyl algebra is a F\o lner C*-algebra.
\end{proof}

\begin{remark}\label{rem:approx}
 It is well-known that the Weyl algebra has a unique tracial state. By the results of Section~\ref{sec:2} it is clear that it is an amenable trace and that it can be approximated by a net of states $\{\rho_n\}_n$ in any essential representation. Note nevertheless that, from a physical point of view, this state has not been considered in the physical literature, since it is non-regular and, hence, one lacks the connection with the usual language of quantum fields. We refer to \cite{GH89,GL00} for other results concerning the importance of nonregular states in the presence of quantum constraints.
\end{remark}

%%%%%%%%%%%%%%%%%%%%%%%%%%%%%%%%%%%%%%%%%%%%%%%%%%%%%%%%%%%%%%%%%%%%%%%%%%%%%%%%%%%%%%%%%%%%%%%%%%%%%%%%%%%%%%%%%%%%%%%%%%%%%%%%%%%%%%%%%%%%%%%
\section{The resolvent algebra and finite dimensional approximations}
\label{sec:4}

An alternative C*-algebra also encoding the canonical commutation relations, this time motivated by resolvents of quantum fields, is the so-called {\em resolvent algebra}, introduced by Buchholz and Grundling in \cite{BG08}. This algebra was introduced in order to remedy well-known limitations of the Weyl algebra in the description of quantum systems and quantum dynamics. We refer to \cite{BG08,B14,BG15} for proofs and further results and motivation and to \cite{B17,B18,B20} for additional applications of this algebra to quantum physics. 

Given a non-degenerate symplectic vector space $(X,\sigma)$, 
the resolvent algebra can be defined as an abstract C*-algebra generated by elements $\{ R(\lambda,f)\mid \lambda\in \RS \;\text{and}\; f\in X\}$, 
where for simplicity we denote  $\RS:=\R\setminus\{0\}$ (see Section~3 in \cite{BG08}).
The generators of this algebra satisfy the following relations, which encode
in terms of resolvents the usual properties of fields: normalization, linearity, self-adjointness and resolvent identity. 
For $f,g\in X$ and $\lambda,\nu\in \RS$ the generators satisfy the following six relations:
\begin{eqnarray*}
 R(\lambda,0)&=&-\frac{i}{\lambda} \1\;,\; R(\lambda,f)^*=\R(-\lambda,f)\;,\;  \nu R(\nu\lambda,\nu f)=R(\lambda,f) \;,\\
 R(\lambda,f)-R(\nu,f)&=&i(\nu-\lambda)R(\lambda,f) R(\nu,f)\;,  \\
 R(\lambda,f) R(\nu,g) &=& R(\lambda+\nu,f+g)\Big(R(\lambda,f) + R(\nu,g)+i\sigma(f,g) R(\lambda,f)^2R(\nu, g)\Big) \nonumber
\end{eqnarray*}
In addition, and motivated by the commutation relations of the fields expressed in terms of their resolvents, one requires
\begin{equation}\label{eq:resolvent} 
 \left[ R(\lambda,f),R(\nu,g)\right] = i\sigma(f,g)R(\lambda,f)R(\nu,g)^2 R(\lambda,f)\;.
\end{equation}
Note that these relations imply that the generators $R(\lambda,f)$ are normal operators. We denote the resolvent algebra by
\[
\cR(X,\sigma)=C^*\Big(R(\lambda,f)\mid \lambda\in \RS\;,\; f\in X\Big).
\]

There is a remarkable difference between the Weyl algebra, which is simple and has a unique (non-regular) tracial state, and the resolvent algebra,
which has a rich variety of ideals. This latter fact is crucial to accommodate the description of relevant quantum dynamics as pointed out on \cite[p.~2767]{BG08}.
Moreover, the ideals and the structure of the algebra depend on the dimension of the underlying symplectic space. In fact, if $\mathrm{dim}X<\infty$ then $\cR(X,\sigma)$
is a Type~I C*-algebra, in particular it is nuclear, which means it has a nice representation theory (see \cite[Section~IV.1]{bBlackadar06}).
If $\mathrm{dim}X=\infty$ then the resolvent algebra is merely nuclear.

From Theorem~4.6 in \cite{B14} one can construct traces exploring the maximal ideals of $\cR(X,\sigma)$. 
Let $Z$ be a symplectic subspace of $(X,\sigma)$, i.e., $Z\subset Z^{\perp_\sigma}$, and $\chi$ a pure state on the
Abelian C*-algebra $\cA(Z):=C^*\left(R(\mu,g)\mid \mu\in\RS\;,\; g\in Z \right)$. For each pair $(Z,\chi)$ one can associate a proper maximal ideal 
$\cI=\cI(Z,\chi)$ generated by the sets
\[ 
   \left\{ 
    R(f,\lambda) -\chi(R(f,\lambda))\1\mid \lambda\in\RS\,,\, f\in Z
  \right\}
  \cup
  \left\{ 
   R(g,\mu)\mid \mu\in\RS\,,\, f\in X\setminus Z
  \right\}
  .
\]
In this case one has $\cR(X,\sigma)=\C\1+\cI$, i.e., $\cI$ has codimension~$1$. Tracial states can then be defined by choosing $\tau(D)=0$ if $D\in\cI$.
Note that, since the resolvent algebra is nuclear (cf. \cite[Theorem~3.8~(i)]{B14}) these traces are amenable. We will show next that these traces allow a much 
stronger approximation by means of tracial states of matrix algebras. We will base our reasoning on well-known results from Chapters~3 and 4 in \cite{Brown06}.
Recall that a tracial state $\tau$ on a unital C*-algebra $\cA$ is \textit{uniform locally finite dimensional} if there exist a net of u.c.p. maps
$\varphi_n\colon\cA\to M_{k(n)}(\C)$ satisfying the following two conditions:
\begin{equation}
\label{eq:LFD1} 
   \|\tau-\mathrm{tr}\circ\varphi_n\|_{\cA^*} \to 0 \;,
\end{equation}
where $\|\cdot\|_{\cA^*}$ is the (uniform) norm on the dual of $\cA$; and
\begin{equation}
\label{eq:LFD2} 
   d\left(A,\cA_{\varphi_n}\right) \to 0\;\; \text{for all} \;\; A\in\cA, 
\end{equation}
where $\cA_{\varphi_n}$ is the multiplicative domain of $\varphi_n$ (cf. Eq.~(\ref{eq:multiplicative})), i.e., for any $A$ there are $A_n\in\cA_{\varphi_n}$
with $\|A-A_n\|\to 0$.

Finally, we will exploit the fact that, in general, the resolvent algebra $\cR(X,\sigma)$ is the inductive limit of the resolvent algebras associated with all the
finite dimensional subspaces of $X$.

\begin{theorem}
Given a non-degenerate symplectic vector space $(X,\sigma)$, any trace in the resolvent algebra $\cR(X,\sigma)$ is uniform locally 
finite dimensional, i.e., there exist u.c.p. maps $\varphi_n\colon\cA\to M_{k(n)}(\C)$ satisfying conditions (\ref{eq:LFD1}) and (\ref{eq:LFD2})
above. In particular, the resolvent algebra is a F\o lner C*-algebra and for any tracial state $\tau$ the von Neumann algebra $\pi_{\tau}\left(\cR(X,\sigma)\right)''$ is 
hyperfinite, where $\pi_\tau$ denotes the GNS representation corresponding to $\tau$.
\end{theorem}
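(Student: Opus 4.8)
The plan is to combine the nuclearity of $\cR(X,\sigma)$ with the Type~I structure of its finite-dimensional building blocks, to realise the required approximations by explicit averaging maps, and only at the end to invoke the amenable/uniform trace machinery of Chapters~3 and~4 of \cite{Brown06}. Write $\cR(X,\sigma)=\varinjlim_Y\cR(Y,\sigma)$ as the inductive limit over the finite-dimensional symplectic subspaces $Y\subseteq X$ with injective connecting $*$-homomorphisms; each block $\cR(Y,\sigma)$ is separable and Type~I, hence nuclear, so $\cR(X,\sigma)$ is nuclear. First I would fix a tracial state $\tau$ and record the decisive structural fact that $\tau$ vanishes on the minimal ideal $\cong\cK(\cH)$ of each block, since a bounded trace on the compacts of an infinite-dimensional space is necessarily zero. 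Thus $\tau$ factors through the Type~I quotient, which is a continuous field of matrix algebras over a base $\Omega_Y$ (metrisable, as the block is separable), and is there given by integration of fibrewise matrix traces against a Radon measure.

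The heart of the argument is then the construction of the witnessing maps on a single block, for which the model case is commutative. For $C(\Omega)$ with a probability measure $\mu$ I would use \emph{gapped averaging}: choose small-diameter Borel pieces $C_1,\dots,C_m$ whose union carries all but mass $\delta$, and set $\varphi(f):=\diag\big(\mathrm{avg}_{C_1}f,\dots,\mathrm{avg}_{C_m}f\big)\in M_m(\C)$. A direct estimate gives $\|\mu-\mathrm{tr}\circ\varphi\|_{\cA^*}\le 2\delta$, so that condition (\ref{eq:LFD1}) holds as $\delta\to 0$; meanwhile the multiplicative domain of $\varphi$ is exactly the algebra of functions that are constant on each $C_i$, which becomes norm-dense as $\max_i\mathrm{diam}(C_i)\to 0$, giving condition (\ref{eq:LFD2}). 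Amplifying this construction over the matrix fibres of the continuous field yields u.c.p. maps on the Type~I quotient, and hence on $\cR(Y,\sigma)$, witnessing uniform local finite dimensionality of $\tau|_{\cR(Y,\sigma)}$.

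To pass to $\cR(X,\sigma)$ I would index the net by the pairs $(Y,\varphi)$ and extend each block map by Arveson's theorem, as in Proposition~\ref{pro:inductive}. Condition (\ref{eq:LFD2}) assembles automatically, since the multiplicative domain can only grow under extension and $\bigcup_Y\cR(Y,\sigma)$ is norm-dense; the remaining point is to arrange the extension so that $\mathrm{tr}\circ\overline{\varphi}$ stays uniformly close to $\tau$ on all of $\cR(X,\sigma)$, that is, so that (\ref{eq:LFD1}) survives globally. Granting uniform local finite dimensionality, the two corollaries are soft. On a multiplicative domain the maps are $*$-homomorphisms, so they are asymptotically multiplicative in the $\|\cdot\|_{2,\mathrm{tr}}$-norm and satisfy $\mathrm{tr}\circ\varphi_n\to\tau$; by Theorem~\ref{teo:Kirchberg} the trace $\tau$ is amenable, whence $\cR(X,\sigma)$ is a F\o lner C*-algebra by Theorem~\ref{teo:equivalence}. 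Finally $\pi_\tau\big(\cR(X,\sigma)\big)''$ is injective because $\cR(X,\sigma)$ is nuclear, and therefore hyperfinite by Connes' theorem.

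The hard part will be the simultaneous validity of the two \emph{norm-level} conditions (\ref{eq:LFD1}) and (\ref{eq:LFD2}), which is precisely the upgrade from amenability to uniform local finite dimensionality. The tension is genuine: the hyperfinite weak closure $\pi_\tau(\cR(X,\sigma))''$ supplies \emph{trace-preserving} conditional expectations $E_n$, so the maps $E_n\circ\pi_\tau$ satisfy (\ref{eq:LFD1}) on the nose but have multiplicative domain equal to the pre-image of a finite-dimensional subalgebra, which is far from norm-dense; the gapped-averaging maps have the opposite defect, reproducing the trace only locally. Reconciling the two, and doing so over a base that is only metrisable block by block while the limiting base for infinite-dimensional $X$ need not be, so that $\cR(X,\sigma)$ is genuinely non-separable, is where the argument must lean on the non-separable amenable-trace and F\o lner theory of \cite{Bedos95,ALLW-1,ALLW-2} together with Chapter~4 of \cite{Brown06}.
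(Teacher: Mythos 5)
Your overall architecture (finite-dimensional blocks, the inductive limit over finite-dimensional symplectic subspaces from Proposition~4.9 of \cite{BG08}, Arveson extension, then the Kirchberg/F\o lner equivalences) matches the paper's, but where the paper disposes of both the block case and the limit case in one stroke by citing Corollary~4.4.4 of \cite{Brown06} --- every trace on a type~I C*-algebra is uniform locally finite dimensional, and this property persists for the inductive limits at hand --- you attempt to rebuild that corollary by hand, and the reconstruction contains a structural error. For finite-dimensional $Y$ the type~I quotient of $\cR(Y,\sigma)$ by its minimal ideal is \emph{not} a continuous field of matrix algebras: by Buchholz's analysis in \cite{B14} the composition series of $\cR(Y,\sigma)$ has subquotients of the form $C_0(\R^{k})\otimes\cK(\cH)$ with \emph{infinite-dimensional} fibres, so ``amplifying over the matrix fibres'' has no meaning there. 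The repair is that your own vanishing argument iterates: a bounded trace kills every stabilized subquotient, hence $\tau$ factors through the abelian quotient $\cR(Y,\sigma)/\cI_c$ (cf. Proposition~4.7 of \cite{B14}), after which your gapped-averaging maps do yield (\ref{eq:LFD1}) and (\ref{eq:LFD2}) on a block --- provided you also weight the diagonal blocks with multiplicities proportional to $\mu(C_i)$ and take the $C_i$ closed and separated (the unweighted $\diag$ you wrote reproduces $\tau$ only for equal-mass pieces, and density of the multiplicative domain needs the gaps for continuous interpolation). So the block case is salvageable, but not as stated.

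The decisive gap, however, is the one you flag yourself and never close: passing (\ref{eq:LFD1}) through the inductive limit. Arveson extension does preserve the multiplicative domain (if $\varphi(a^*a)=\varphi(a)^*\varphi(a)$ and $\varphi(aa^*)=\varphi(a)\varphi(a)^*$ for $a$ in the block, the same identities hold for $\overline{\varphi}$, so (\ref{eq:LFD2}) assembles as you say), but it gives no control whatsoever on $\|\tau-\mathrm{tr}\circ\overline{\varphi}\|_{\cA^*}$: two states that agree on a unital C*-subalgebra can be at dual-norm distance $2$ on the ambient algebra, so closeness on $\cR(Y,\sigma)$ does not globalize, and your final paragraph concedes that reconciling the two norm-level conditions is unresolved. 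A proposal whose self-declared ``hard part'' is left open is not a proof; this globalization is precisely the content of Brown's Corollary~4.4.4, which the paper invokes twice rather than re-derives. Finally, your route to hyperfiniteness (nuclearity implies $\pi_\tau(\cR(X,\sigma))''$ is injective, then Connes) requires a separable predual, which is unavailable for infinite-dimensional $X$ since $\cR(X,\sigma)$ is non-separable; the paper instead deduces hyperfiniteness from the \emph{uniform} amenability of $\tau$ just established, via Theorem~3.2.2 of \cite{Brown06} --- exactly the quantitative strengthening your construction was supposed to deliver but does not.
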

\begin{proof}
Consider first the case where $\mathrm{dim}X<\infty$. Then by Theorem~3.8 in \cite{B14} $\cR(X,\sigma)$
is type~I and from Corollary~4.4.4 in \cite{Brown06} we conclude that any trace is uniform locally finite dimensional. This implies that any trace is amenable and 
hence $\cR(X,\sigma)$ is a F\o lner C*-algebra. 

Second, if $\mathrm{dim}X =\infty$ then from Proposition~4.9~(ii) in \cite{BG08} $\cR(X,\sigma)$ is the inductive limit of the net of all $R(S,\sigma)$ where $S \subset X$ 
ranges over all finite-dimensional nondegenerate subspaces of $X$. Moreover, by Proposition~4.9~(i) of \cite{BG08} the connecting maps are injective and so 
$\cR(X,\sigma)$ is a F\o lner C*-algebra as well.

Applying again Corollary~4.4.4 in \cite{Brown06}
to the inductive limit algebra we have that any trace in the resolvent algebra is uniform locally finite dimensional. Finally, since any trace is, in particular, a uniform 
amenable trace it follows from Theorem~3.2.2 in \cite{Brown06} that $\pi_{\tau}\left(\cR(X,\sigma)\right)''$ is a hyperfinite von Neumann algebra.
\end{proof}

\begin{remark} 
\begin{enumerate}
 \item If one is only interested in the C*-algebra there are different ways to show that the resolvent algebra is a F\o lner C*-algebra.
 If fact, since $\cR(X,\sigma)$ is a unital nuclear C*-algebra with a tracial state it is in the F\o lner class (see Remark~\ref{rem:facts}~(iii)).
 Alternatively, from Proposition~4.7 in \cite{B14} the ideal $\cI_c$ generated by commutators of generators of the algebra is proper and its quotient is an 
 Abelian C*-algebra. Therefore, by Remark~\ref{rem:facts}~(ii), the resolvent algebra must also be F\o lner.
 \item Finally, it is an interesting question if the Weyl or resolvent algebras are quasidiagonal. Note that local finite dimensional approximation of the trace $\tau$
 implies that the trace is, in particular, quasidiagonal since one can approximate in norm any pair of elements $A,B\in \cA$ by elements in the multiplicative domain. In fact, from Eq.~(\ref{eq:LFD2}) there exist $A_n,B_n\in\cA_{\varphi_n}$ such that $\|A-A_n\|\to 0$, $\|B-B_n\|\to 0$ and so
 \[
  \varphi_n(AB)\approx \varphi_n(A_nB_n)=\varphi_n(A_n)\varphi_n(B_n)\approx\varphi_n(A)\varphi_n(B)
 \]
 (see \cite[Section~3.4]{Brown06} for more details). From Theorem~\ref{teo:Kirchberg} it follows that the traces are amenable as well. However, the traces we consider for the resolvent algebra are manifestly not faithful, as they vanish in the ideals $\cI_c$. Therefore, should one wish to prove that $\cR(X,\sigma)$ is always quasi-diagonal it is not enough to only consider these traces.
\end{enumerate}

\end{remark}

%%%%%%%%%%%%%%%%%%%%%%%%%%%%%%%%%%%%%%%%%%%%%%%%%%%%%%%%%%%%%%%%%%%%%%%%%%%%%%%%%%%%%%%%%%%%%%%%%%%%%%%%%%%%%%%%%%%%%%%%%%%%%%%%%%%%%%%%%%%%%%%%%

\renewcommand{\bibname}{References}

\end{document}